\newcommand{\jump}[1]{[\![#1]\!]}
\newcommand{\ave}[1]{\{\!\{#1\}\!\}}
\newcommand{\triple}[1]{|\!|\!|#1|\!|\!|}
\newtheorem{Theorem}{Theorem}[section]
\newtheorem{Proposition}{Proposition}[section]
\numberwithin{equation}{section}
\title{Time-Dependent Wave-Structure Interaction Revisited: Thermo-piezoelectric Scatterers }\author{{\sc George  C. Hsiao}\thanks{Department of Mathematical Sciences, University of Delaware,
Newark, DE 19716-2553, USA \quad
Email: {\tt ghsiao@udel.edu}} \quad
and \; {\sc Tonatiuh  S\'{a}nchez-Vizuet} \thanks{ Department of Mathematics, The University of Arizona, Tucson, AZ 85721-0089 USA 
Email: {\tt tonatiuh@math.arizona.edu}.}}
\date{\em \small Dedicated to Professor Wolfgang L. Wendland \\ on the occasion of his 85th Birthday}
\begin{document}
\maketitle

\abstract{In this paper, we are concerned with a time-dependent transmission problem for a thermo-piezoelectric elastic body immersed in a compressible fluid. It is shown that the problem can be treated by the boundary-field equation method, provided an appropriate scaling factor is employed. As usual, based on estimates for solutions in the Laplace-transformed domain, we may obtain properties of corresponding solutions in the time-domain without having to perform the inversion of the Laplace-domain solutions. \\}

\noindent
{\bf Key words}: Wave-structure interaction; Coupling procedure;  Kirchhoff representation formula; 
Retarded potential; Laplace transform;  Boundary integral equation;  Variational formulation; Sobolev space; Transient waves; Thermoelasticity; Piezoelectricity. 
\noindent
{\bf Mathematics Subject Classifications (1991)}:35J20, 35L05, 45P05, 65N30, 65N38.

% ===================================================
\section{Introduction}\label{sec:Introduction}
% ===================================================
%
The mathematical description of the interaction between an acoustic wave and an elastic body is of central importance in applied mathematics and engineering, as attested for instance by its useage for detection and identification of submerged objects. The problem is mathematically formulated as a transmission problem between elastic and acoustic fields communicating through an interface and is referred to in the literature either as ``fluid-structure interaction problem" or``wave-structure interation problem". The former terminology (wave-structure interaction) is also used to describe a similar problem that involves the coupling between fluid equations (either Stokes or Navier-Stokes) and the equations of elasticity. Here we will be interested in the coupling between the acoustic and elastic wave equations, and we will use the term "wave-structure interaction" exclusively to avoid any confusion.

In the early days of the field, most of the mathematical formulations of these kinds of problems were based on time-harmonic formulations. Motivated by the paper of Mamdi and Jean \cite{HJ:1985},  Hsiao, Kleinman and Schuetz 's paper from 1988 \cite{HKS:1988} gave the first mathematical justification of a variational formulation for wave-structure interaction problems. This set out the field for many further efforts that expanded the understanding of time-harmonic scattering (see, e.g. \cite{BiMa:1991, AmHr:1988, LuMa:1995, ScBe:1989, Hs:1994}). Over the years, time-harmonic wave-structure interaction problems have been studied in various different areas such as inverse problems \cite{EHR:2007, EHR:2008}, interaction of fluid and thin structures \cite{HsNi:2003}, interaction of electromagnetic fields and elastic bodies \cite{CaHs:2002, GHM:2010}, to name just a few. 
  
One of the main reasons behind the use of the boundary-field equation method for treating time-harmonic wave-structure problems is to reduce the transmission problem, posed originally in an unbounded domain, to one set in the bounded domain $\Omega$ determined by the elastic scatterer (see Figure \ref{fig:fig1}). However, the conversion from an unbounded to a bounded domain comes at the price of turning the problem into a non-local one, which brings along some mathematical disadvantages. Since the sesquilinear form arising from the the nonlocal boundary problem can satisfy  only a G\aa rding inequality, in oder to apply the standard Fredholm alternative for the existence theory, the uniqueness of the solution becomes a requirement. However, the straightforward boundary-field method can not circumvent the drawbacks, because the problem is not uniquely solvable when the frequency of the incident wave coincides with what is known as a "Jones frequency". At such a frequencies, the corresponding homogeneous problem may have traction free solutions (a recent discussion on this can be found in \cite{DoNiSu:2019}). Moreover, uniqueness of the solutions to the boundary integral equations may not be guaranteed when the exterior wavenumber coincides with an eigenvalue of the corresponding interior Dirichlet problem (see \cite{HKR:2000}). The issue of non-uniqueness has motivated lots of research, and attempts to overcome these difficulties  have been made with the help of methods such as Schenck's Combined Helmholtz Integral Equation Formulation \cite{Sch:1968} (known commonly as the CHIEF method) and the celebrated formulation by Burton and Miller \cite{BuMi:1971}.

\begin{figure}[h]\centering
\includegraphics[width=0.3\linewidth]{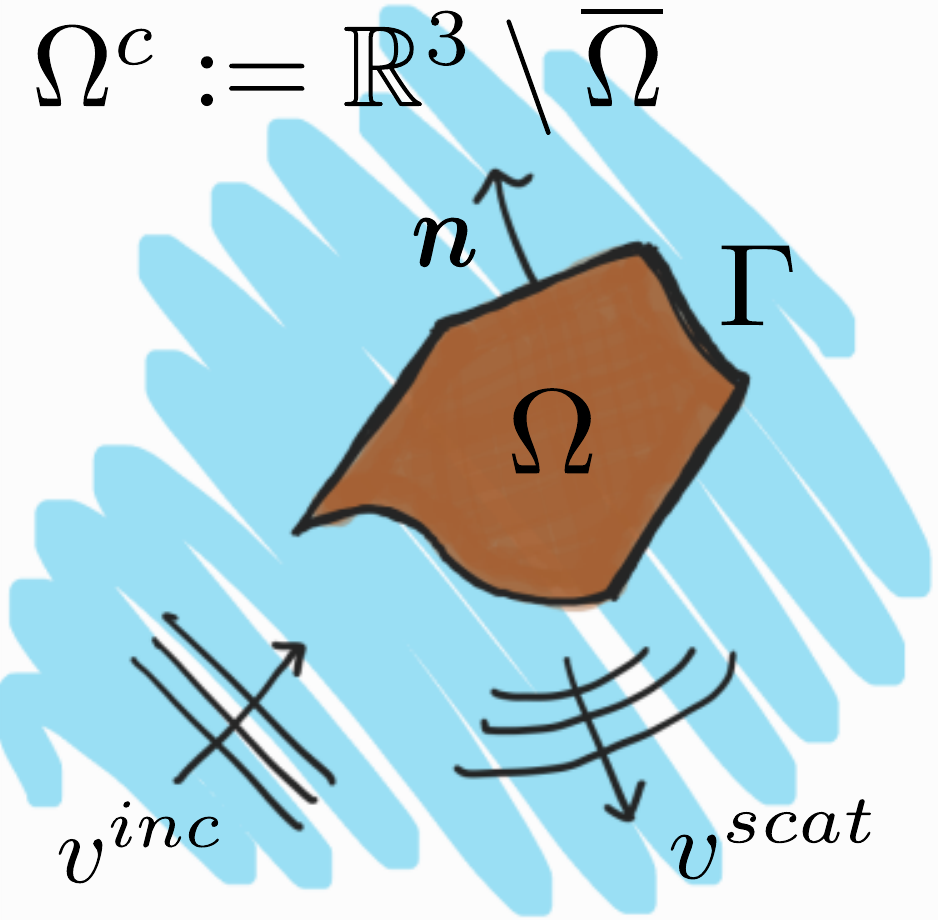} 
\caption{Schematic of the wave scattering problem. The interface between the solid and the fluid is denoted by $\Gamma$, while the outward-pointing normal vector (defined for almost every point in the boundary) is denoted by $\boldsymbol n$.}\label{fig:fig1}
\end{figure} 
 
In the present paper,  inspired by the work of Estorff and Antes \cite{EsAn:1991}, we will apply the boundary-field equation method not to a time-harmonic problem, but rather one in the transient regime. This will require the treatment of the wave equation, as opposed to the Helmholtz equation that is used in the frequency domain. The problem of interest is that of the interaction between a thermo-piezoelectric elastic body immersed in a compressible fluid. The method will not be directly applied in the time-domain, but rather in the Laplace transformed domain. The reasons for this will be made clear in due time. The equations will then be reduced to those of a nonlocal boundary problem in the transformed domain, where all the analysis will be performed. The technique that will be applied will allow us to understand the behavior of the transient problem (and even simulate it computationally if we were so inclined) \textit{without ever having to invert the Laplace transform}.
  
The outline of the solution/analysis procedure for the time-dependent wave-structure interaction is as follows: 

\begin{compactenum}
\item Formulate a time-dependent transmission problem. 
\item Apply the Laplace transform to the time-dependent transmission problem.
\item Reduce the transformed transmission problem to a nonlocal boundary problem in the bounded domain $\Omega$ with the help of a Boundary Integral Equation (BIE). This leads to the boundary-field equation formulation of the problem in the transformed  domain.
\item Obtain estimates of variational solutions of the nonlocal boundary problem in terms of the Laplace transformed variable $s$. 
\item Deduce estimates for the solutions in the time domain from those of the corresponding solutions in the Laplace domain using Lubich's and Sayas's approach for treating BIEs of the convolution type \cite{Lu:1994, LuSc:1992, LaSa:2009b}).
\end{compactenum}

The process described above has been successfully applied to a number of special cases \cite{HaQiSaSa:2015, HsSaSa:2016, HsSaSaWe:2016, SaSa:2016}. However, in all the cases under considerations the formulations in the fluid domain were given in terms of velocity potentials, not in terms of standard fluid pressures. As will be seen, to formulate the problem in term of fluid pressure, an appropriate scaling factor will have to be introduced.

The analysis will proceed more or less following the steps outlined above. The time-domain formulation of the problem is introduced in Section \ref{sec:Formulation}, the corresponding nonlocal boundary problem in the Laplace transferred domain is then described in Section \ref{sec:NonLocalProblem}. Section \ref{sec:VariationalSolutions} contains mathematical ingredients concerning crucial estimates for the solution of the nonlocal boundary problem in the transformed domain. The main results in the time domain are presented in Section \ref{sec:TimeDomain} and end the paper with some concluding remarks on Section \ref{sec:Conclusions}.  
%
% ====================================================
\section{Formulations of the problem}\label{sec:Formulation} 
% ====================================================
%
We will denote by $\Omega$ an open and bounded subset of $\mathbb{R}^3$ that will be considered to be occupied by an elastic solid. We will further assume that the boundary of the solid is described by a Lipschitz-continuous curve and will denote it by $\Gamma$. The exterior of this solid, which will be denoted by $\Omega^c = \mathbb{R}^3  \setminus  \overline\Omega$, will be filled by an inviscid and compressible fluid. A schematic of the geometric setting is depicted in Figure \ref{fig:fig1}.

We will consider that, when at rest, the velocity, pressure and density in the fluid are described by the constant fields $ {\bf v}_0 = 0, p_0$ and $ \rho_f $, and will be interested in the time evolution of small parturbations from this static configuration as described by the fields ${\bf v}, p $ and  $\rho$ which is given by the linearized Euler equation in the fluid domain $\Omega^c$
 
\begin{equation} \label{eq:2.1} 
\rho_f \frac{\partial {\bf v} } {\partial  t} +  \nabla p =  {\bf 0}, 
\end{equation}

the continuity equation 

\begin{equation}\label{eq:2.2}
\frac{\partial \rho}{\partial t} + \rho_f \, \nabla\cdot{\bf v} = 0,
\end{equation}

for $\rho$, and ${\bf v}$, and the state equation for $p$ and $\rho$

\begin{equation}\label{eq:2.3}  
p = c^2 \rho.
\end{equation}

Above, the sound speed $c$ is a function that varies depending on the properties of the fluid (see e.g., \cite{Ac:1990, Se:1958}), and the operator $\frac{\partial}{\partial t}$ is the usual partial derivative with respect to the time variable, not to be confused with the \textit{material derivative}. All these equations are posed in $\Omega^c \times [0, \infty)$. A simple manipulation shows  that with the help of \eqref{eq:2.2} and \eqref{eq:2.3}, we may replace equation \eqref{eq:2.1} by a single wave equation for the pressure $p$

\[ 
\frac{1}{c^2} \frac{ \partial ^2 p}{\partial t ^2}  - \Delta p = 0  \quad \mbox{in} \quad \Omega^c  \times [0, \infty) .
\] 
 
Now, inside the domain $\Omega$ occupied by the solid, the governing equation depends on the properties of the solid. It may be as simple as an elastic obstacle, or it may have more complicated physical properties such as a thermoelastic solid, or a thermopiezoelectric solid as in our present case. The problem under consideration, for a thermo-piezoelectric body, consists of determining the stress and strains tensors,  $\bm{\sigma}(\boldsymbol x, t)$ and $\bm {\varepsilon}(\boldsymbol x, t)$, the elastic displacement $ {\bf  u} (\boldsymbol x, t) $, temperature variation $\theta (\boldsymbol x,t) $ and the electric potential $\varphi(\boldsymbol x, t)$. The physics of the process can be described in terms of the reference density of the solid $\rho_e$, the absolute temperature in the solid $T$ and its stress-free reference temperature $T_0$, the electric displacement vector $\mathbf D(\boldsymbol x,t)$, and the entropy per unit volume $P(\boldsymbol x,t)$. The governing equations have been derived by Mindlin \cite{Mi:1961} and consist of three coupled partial differential equations, namely the dynamic elastic equations

\begin{equation}\label{eq:elastic}
\rho_{e} \;  \frac{\partial^2 \bf u} {\partial t^2} -  \nabla\cdot  \bm{\sigma}  = {\bf 0}, 
\end{equation}

the generalized heat equation

\begin{equation}\label{eq:thermal}
T\;  \frac{\partial P}{\partial t} - \Delta\, \theta  =  0,  \quad \theta := T - T_0,
\end{equation}

and the equation of the quasi-stationary electric field (i.e., Gauss's electric field law without electric charge density):

\begin{equation}\label{eq:electric}
 \nabla\cdot{\bf D}  =  0. 
\end{equation}

These equations need to be supplied with adequate constitutive relations providing a description of the functional dependence between the unknown variables within the thermo-piezoelectric media. In the isotropic case, the constitutive relations may be simplified in the form (see \cite{Ku:1979}): 

\begin{align}
\label{eq:2.8a} 
\bm{\sigma}& =  \bm{\sigma}({\bf u}, \theta, \varphi) :=\bm{\sigma}_{e}({\bf u} ) - \zeta \, \theta\, {\bf I} - {\bf e}^{\top}{\bf E}, \\
\nonumber
P & = P({\bf u}, \theta, \varphi) :=  \zeta\,  \nabla\cdot{\bf u} +  \frac{c_\epsilon} {T_0} \theta
 + {\bf p} \cdot {\bf E},\\
\label{eq:2.10a}   
{\bf D} & = {\bf D}( {\bf u}, \theta, \varphi ) : = \; {\bf e} \,  \bm {\varepsilon} ({\bf u}) \,  +  \theta \, {\bf p} 
+ \epsilon \, {\bf E},
\end{align}

where 
 
\[
\bm{\sigma}_{e}:= \lambda \, (\nabla\cdot{\bf u} ) \, {\bf I} + 2\,  \mu \, {\bm \varepsilon }( {\bf u}), \quad  \mbox{and} \quad \bm{\varepsilon}( {\bf u}) : = \frac{1}{2} ( \nabla{\bf u} + \nabla {\bf u}^{\top} )
\]

are the usual stress and strain tensors for isotropic elastic media, while ${\bf e} = (( \mathrm e_{ijk})) $ is the piezoelectric tensor with constant elements such that $\mathrm e_{kij} = \mathrm e_{kji}$. This third order tensor maps matrices into vectors, while its adjoint, which will be denoted by ${\bf e}^{\top}$, maps vectors into symmetric matrices. More precisely, for a real symmetric matrix $\mathbf M\in \mathbb R^{d\times d}_{sym}$ and for a vector, $\mathbf d \in \mathbb R^d$ we define

\[
(\mathbf e\mathbf M)_{k} := \sum_{ij}\mathrm e_{kij}\mathbf M_{ij} \in \mathbb R^d
\quad \text{ and } \quad
(\mathbf e^\top\mathbf d)_{ij} := \sum_{k}\mathrm e_{kij}\mathbf d_k \in \mathbb R^{d\times d}_{sym}.
\]

The constants $\zeta$ and $\epsilon $ are respectively  the thermal and dielectric constants;  $ c_{\epsilon}$ is the specific heat at constant strain, and the constant vector ${\bf p} $ is the pyroelectric moduli vector. The electric field  {\bf E} in the constitutive equations is replaced by ${\bf E} = - \nabla \varphi$. As usual, $\mu>0$ and $\lambda$ are the Lam\'{e} constants for the elastic body (note that it is customary to require $\lambda>0$, however this is not necessary as long as the physically meaningful quantity $3 \lambda +2 \mu$, known as the bulk modulus, remains positive). The theory of thermopiezoelectricity was first proposed by Mindlin \cite{Mi:1961}. The physical laws for thermopiezoelectric materials were explored by Nowacki \cite{No:1978} (GCH would like to thank Prof. T.W. Chou for locating this reference for him),\cite{No:1978a}, where more general constitutive relations are available than those given in \eqref{eq:2.8a} - \eqref{eq:2.10a}. 
 
Making use of these constitutive relations in conjunction with the governing equations \eqref{eq:elastic}, \eqref{eq:thermal}, and \eqref{eq:electric} we arrive at  differential equations
   
\begin{align}
\rho_{e} \;  \frac{\partial^2 \bf u} {\partial t^2} - \nabla\cdot\left(\bm{\sigma}_{e} \, ( {\bf u } ) -  \zeta \theta\,  { \bf I }  + { \bf e}^{\top} \nabla \varphi \right)  =\,& {\bf 0} \label{eq:2.11a}\\ 
\frac{\partial }{\partial t} \left(\zeta \, \nabla\cdot{\bf u}  - {\bf p } \cdot \nabla \varphi \right)   + \frac{1}T_0 \left(   c_{\epsilon} \frac{\partial \theta} {\partial t}    - \Delta \theta \right)   =\,& 0 \label{eq:2.12a} \\
   \nabla\cdot\left( {\bf e } \, \bm{\varepsilon} ( {\bf u} ) +  \theta {\bf p} 
 - \epsilon\,  \nabla \, \varphi  \right)   =\,& 0 \label{eq:2.13a}
\end{align}
 
We remark that Equation \eqref{eq:2.12a} is derived under the assumption that   
$ | \frac{\theta} {T_0}| \ll 1 $. This means $ T \simeq T_0$, since $ T = T_0 ( 1 + \frac{\theta}{T_0} )$. Equations \eqref{eq:2.11a} -\eqref{eq:2.13a} constitute the complete set of equations of thermopiezoelectricity coupling a hyperbolic equation for ${\bf u}$, a parabolic equation for $\theta$, and an elliptic equation for $\varphi$. Here and in the sequel, all the constant physical quantities satisfy

\[ 
\rho_{e} > 0,\,  \mu > 0, \,  3 \lambda +2 \mu > 0, \,  \mathrm e_{ijk} > 0,\,  \zeta > 0, \,  c_{\varepsilon} > 0.
 \]

To formulate a typical time-dependent wave-structure problem, we need to prescribe initial, boundary and transmission conditions. This leads to a model of partial differential equations for the time-dependent wave-structure problem.\\[2mm]

\noindent
{\bf Time-dependent transmission problem.} {\it Given $ ( p^{inc}, \partial_n p^{inc}, f_{\theta}, f_{\bf D} ) $, find the solutions $ ({\bf u}, \theta, \varphi) $ in $\Omega \times [0, \infty)$,  and $p$ in $ \Omega^c \times [0, \infty) $  satisfying the partial differential equations}

\begin{alignat}{6}
\label{eq:2.14a}
\rho_{e} \;  \frac{\partial^2 \bf u} {\partial t^2} - \nabla\cdot\left(\bm{\sigma}_{e} \, ( {\bf u } ) -  (\zeta \theta) { \bf I }  + { \bf e}^{\top} \nabla \varphi \right)  =\,& {\bf 0} && \quad \mbox{ \textit{in}} \quad \Omega \times [0, \infty) \\ 
\nonumber
\frac{\partial }{\partial t} \left(\zeta \, \nabla\cdot{\bf u}  - {\bf p } \cdot \nabla \varphi \right)   + \frac{1}{T_0 } \left(   c_{\epsilon} \frac{\partial \theta} {\partial t}    - \Delta \theta \right)   =\,& 0  && \quad  \mbox{ \textit{in}} \quad \Omega \times [0, \infty)  \\
\nonumber
    \nabla\cdot\left( {\bf e } \, \bm{\varepsilon} ( {\bf u} ) +  \theta {\bf p} 
 - \epsilon\,  \nabla \, \varphi  \right)   =\,& 0  && \quad  \mbox{ \textit{in}} \quad \Omega \times [0, \infty)  
 \end{alignat}

{\it and}
 
\begin{equation}
\label{eq:2.15}
\frac{1}{c^2} \frac{ \partial ^2 p}{\partial  t ^2}  - \Delta p = 0  \quad \mbox{\textit{in}} \quad \Omega^c  \times [0, \infty) .
\end{equation}

{\it together with the transmission conditions} 
 
\begin{alignat}{6}
\label{eq:2.16}
\bm {\sigma}({\bf u}, \theta, \varphi) ^- {\bf n}   =\,& - \left( p + p^{inc}\right)^+ {\bf n} && \quad  \mbox{on} \quad \Gamma \times [0, \infty),  \\
\nonumber
\frac{\partial  {\bf u}^-  } {\partial t} \cdot {\bf n}   =\,& - \frac{1} {\rho_f}  \int_0^t   \frac{\partial}{\partial n} \left( p + p^{inc} \right)^+  d\tau  && \quad  \mbox{on} \quad \Gamma \times [0, \infty), 
\end{alignat}  
 
{\it the  boundary conditions } 

\begin{equation}
 \partial_n \theta = f_{\theta}, \quad {\it and } \quad {\bf D} \cdot n = f_{\bf D} \quad \mbox{on} \quad \Gamma \times 
 [0, \infty) \label{eq:2.17} 
\end{equation}

{\it and homogeneous initial conditions for $ {\bf u}, \; \partial {\bf u}/ \partial t, \theta, p $ 
and $\partial p / \partial t $}.

The given data and solutions are required to satisfy certain regularity properties that will be specified later. In the formulation, we use the superscripts $^+$ or $^-$ to denote the \textit{traces} or restrictions to the boundary $\Gamma$ of a function when taken as limits from functions defined on $\Omega^c$ and $\Omega$, respectively. This is equivalent to the notation $v^+ = \gamma^+ v$ and $v^-= \gamma^- v$ customary in the mathematical literature.  Whenever the trace---or restriction---of a function to the boundary does not depend on the side from which the limit is taken, we will drop the superscript and write only $\gamma v$. In this  formulation,  one  has to solve the wave equation for the pressure in the exterior---unbounded---domain, which can be a drawback from the  computational point of view. 

To sidestep the challenge of undboundedness, we will resort to a formulation of the transmission problem defined by \eqref{eq:2.14a}, \eqref{eq:2.15}, \eqref{eq:2.16}, and \eqref{eq:2.17} that will couple boundary integral equations with partial differential equations. This technique, put forward in the context of time-harmonic problems \cite{HKR:2000}, transforms the problem into a nonlocal one that is posed only in the bounded computational domain $\Omega$ by representing the pressure in the fluid domain through an integral along the interface $\Gamma$ between the solid and the fluid. To this avail, we must introduce the fundamental solution to the wave equation

 \[ 
\mathcal G (\boldsymbol x-\boldsymbol y, t) = \frac{1}{4\pi |\boldsymbol x - \boldsymbol y|} \,  \delta( t - c^{-1} | \boldsymbol x - \boldsymbol y|).
 \]
 
Above, $\delta(\cdot)$ is Dirac's delta. Using this fundamental solution, it is possible to express any solution to \eqref{eq:2.15} in terms of density functions $\phi$, and $\lambda$ that correspond to the Cauchy data of the problem. Namely the pressure restricted to $\Gamma$ and its normal derivative respectively. This is known as the Kirchhoff representation formula (see e.g., \cite{HsWe:2011a,HsWe:2011b,LaSa:2009b})

 \begin{equation}  
 p(\boldsymbol x,t) =  (\mathcal D * \phi)(\boldsymbol x,t) - ( \mathcal{S} * \lambda)(\boldsymbol x, t) , \quad  (\boldsymbol x,t) \in \Omega^c \times [0, \infty). \label{eq:2.18}
\end{equation}

Above, the asterisk $*$ refers to convolution with respect time time,

\[
f*g = \int_0^t f(t - \tau ) g(\tau) d\tau,
\]

and $\mathcal{D}$ and $\mathcal{S}$ are known respectively as the double- and simple-layer potentials. They can be defined as convolutions with the fundamental solution and its normal derivative

\begin{align*}
( \mathcal{S} * \lambda)(\boldsymbol x, t)  :=\,& \int_0^t \, \int_\Gamma \mathcal G (\boldsymbol x -\boldsymbol y, t- \tau)  \lambda( \boldsymbol y, \tau )\, d \Gamma_{\boldsymbol y}
d\tau \\
=\,& \int_{\Gamma} \frac{1}{4\pi| \boldsymbol x-\boldsymbol y|}\lambda( \boldsymbol y, t - c^{-1} |\boldsymbol x - \boldsymbol y|) \, d\Gamma_{\boldsymbol y} \\
=\,& \int_{\Gamma} E(\boldsymbol x, \boldsymbol y) \lambda( \boldsymbol y, t - c^{-1} |\boldsymbol x - 
\boldsymbol y|) \, d\Gamma_{\boldsymbol y} , \\
 (\mathcal D * \phi)(\boldsymbol x,t) :=\,& \int_0^t \int_\Gamma \frac{\partial}{\partial n_{\boldsymbol y}}  \mathcal G (\boldsymbol x - \boldsymbol y, t- \tau) \phi(\boldsymbol y, \tau) d \Gamma_{\boldsymbol y} d \tau \\
=\,& \int_\Gamma \frac{\partial}{\partial n_{\boldsymbol y}}  \left(\frac{1}{4\pi|\boldsymbol x-\boldsymbol y|} \phi(\boldsymbol y, t - c^{-1} |\boldsymbol x - \boldsymbol y| )\right) d \Gamma_{\boldsymbol y}  \\
=\,&  \int_\Gamma \frac{\partial}{\partial n_{\boldsymbol y}} \left( E (\boldsymbol x , \boldsymbol y )\, \phi(\boldsymbol y, t - c^{-1} |\boldsymbol x - \boldsymbol y| )\right)\,  d \Gamma_{\boldsymbol y}.
 \end{align*}
  
In these equations, we have denoted the fundamental solution of the negative Laplacian in $\mathbb{R}^3$ by $E(\boldsymbol x, \boldsymbol y):= \frac{1}{4\pi|\boldsymbol x-\boldsymbol y|}$. The reader will notice that the convolution with the fundamental solution introduces a delay into the density functions $\lambda$ and $\phi$. It is customary in the wave propagation community, to write  $[\varphi] = \varphi( \boldsymbol y, t -c^{-1} |\boldsymbol x - \boldsymbol y|)$ and call $[\varphi] $ the retarded value  of $\varphi$. This is the reason why sometimes $( \mathcal{S} * \lambda)(\boldsymbol x, t) $ and $ (\mathcal D * \phi)(\boldsymbol x,t)$ are referred to as the \textit{retarded layer potentials}.

Similarly, by introducing  the convolution integral
 
\[
( \mathcal{I}* \varphi) (\boldsymbol x, t)  :=  \int_0^t \int_{\Gamma} \delta (\boldsymbol x- \boldsymbol y; t-\tau) \varphi(\boldsymbol y, \tau) d\Gamma_{\boldsymbol y} d\tau = \varphi(\boldsymbol x, t),
\]

At non-singluar points of $\Gamma$, the Cauchy data $\phi$ and $\lambda$  satisfy the following system of boundary integral equations (see, e.g., \cite{ BaHa:1986a, BaHa:1986b, Co:2004,HsWe:2008})
 \begin{equation}\label{eq:2.20}
\left(
\begin{matrix}
\phi\\[3mm]
\lambda
\end{matrix}
\right)
= \left (
\begin{matrix}  
      \frac{1}{2}\mathcal{I}  + \mathcal{K } &-\mathcal{V} \\[3mm]
       -\mathcal{W}   &   \left(\frac{1}{2} \mathcal{I} - \mathcal{K}\right)'  \\
    \end{matrix}
    \right ) * \left(\begin{matrix}
\phi \\[3mm]
\lambda 
\end{matrix}
\right)
\quad \mbox{on} \quad \Gamma \times [0, \infty). 
 \end{equation}

The boundary integral operators $\mathcal{V}, \mathcal{K}, \mathcal{K}^\prime,$ and $ \mathcal{W}$ appearing above are known respectively as the simple layer, double layer, transpose double layer, and hypersingular boundary integral operators for the dynamic wave equation. They are defined as follows

\begin{equation*}
\left.
\begin{aligned} 
(\mathcal {V}  * \lambda )  :=\,& \ave{\gamma ( \mathcal{S}* \lambda)} 
=   \frac{1}{2} \left(  \gamma^- (\mathcal{S} * \lambda )  +  \gamma^+ (\mathcal{S} * \lambda ) \right)\\
=\, & \gamma^- (\mathcal{S} * \lambda ) = \gamma^+ (\mathcal{S} * \lambda)  \\
(\mathcal{K}*\phi) :=\, & \ave{ \gamma ( \mathcal{K}* \phi) } = \frac{1}{2} \left(  \gamma^- (\mathcal{K} * \phi )  +  \gamma^+ (\mathcal{K} * \phi) \right)\\
(\mathcal{K^{\prime} }* \lambda) :=\, & \ave{ \gamma ( \mathcal{K^{\prime}}* \lambda } = \frac{1}{2} (  \gamma^- (\mathcal{K^{\prime}} * \lambda )  +  \gamma^+ (\mathcal{K^{\prime}} * \lambda ) )\\
(\mathcal{W}* \phi) :=\, &  - \ave{\partial_n( \mathcal{D} * \phi)} 
= - \frac{1}{2} \left(  \partial_n^- (\mathcal{D} * \lambda )  +  \partial_n^+ (\mathcal{D} * \lambda ) \right)\\
= \, & - \partial_n^- (\mathcal{D} * \phi) = - \partial_n^+ (\mathcal{D} * \lambda)  \\ 
\end{aligned}  
\right\}   \quad \mbox{on} \quad  \Gamma\times [0, \infty).
\end{equation*} 

Note that the averaging operator $\ave{\cdot}$ has been defined implictly in the second equality on the first line above. 

We can now state the reformulation of the original problem that we will be focusing on:

\noindent
 {\bf  Time-dependent nonlocal problem}. {\it Given $ ( p^{inc}, \partial_n p^{inc}, f_{\theta}, f_{\bf D} ) $, find the solutions $ ({\bf u}, \theta, \varphi) $ in $\Omega \times [0, \infty) $  and $(\phi, \lambda) $ on $ \Gamma \times [0, \infty) $  satisfying the partial differential equations}
 
\begin{alignat*}{6}
\rho_{e} \;  \frac{\partial^2 \bf u} {\partial t^2} - \nabla\cdot\left( \bm{\sigma}_{e} \, ( {\bf u } ) -  (\gamma \theta) { \bf I }  + { \bf e}^{\top}\nabla \varphi \right)  = \,&{\bf 0} \quad \mbox{\textit{in}} \quad \Omega \times [0, \infty),\\ 
\frac{\partial }{\partial t} \left( \gamma \, \nabla\cdot{\bf u}  - {\bf p } \cdot \nabla \varphi \right)  + \frac{1}{T}_0  \left(   c_{\epsilon} \frac{\partial \theta} {\partial t}    - \Delta \theta \right)   =\,& 0 \quad \mbox{\textit{in}} \quad \Omega \times [0, \infty), \\
   \nabla\cdot\left( {\bf e } \, \bm{\varepsilon} ( {\bf u} ) +  \theta {\bf p} 
 - \epsilon\,  \nabla \, \varphi  \right)   =\,& 0 \quad \mbox{\textit{in}} \quad \Omega \times [0, \infty),
\end{alignat*}

\textit{and the differential- boundary integral equations}

\begin{alignat*}{6}
- \rho_f \frac{\partial {\bf u}} {\partial t}\! \cdot\! {\bf n}\! + \! \! \int_0^t \!\! \left(\! \! (\mathcal{W} \!*\! \phi)(\boldsymbol x, t)\! -\! \frac{1}{2} \lambda(\boldsymbol x,t)\! + \!( \mathcal{K}^{\prime}\! *\! \lambda) (\boldsymbol x, t)\!\! \right) d\tau \!= \,& \!\! \int_0^t \!\! \partial^+_n p^{inc} d\tau  &&\;\;\; \mbox{\textit{on}}  \; \Gamma\! \times\! [0, \infty),\\
 \frac{1}{2}\phi(\boldsymbol x,t)\! -\! (\mathcal{K} \!*\!\phi) (\boldsymbol x, t) \! + \! (\mathcal{V} \!*\! \lambda )(\boldsymbol x, t) \! =\,& \!  0 &&\;\;\; \mbox{\textit{on}}  \; \Gamma \!\times\! [0, \infty).
 \end{alignat*}

\textit{together with the transmission condition} 

\[
 \bm {\sigma}({\bf u}, \theta, \varphi) ^- {\bf n}   = - \left( \phi + p^{inc}\right)^+ {\bf n}, \quad \mbox{\textit{on}}  \; \Gamma \times [0, \infty),
 \]
 
 \textit{the boundary conditions} 
 
\[
 \partial_n \theta = f_{\theta}, \quad {\it and } \quad {\bf D} \cdot n = f_{\bf D} \quad \mbox{\textit{on}} \; \Gamma \times 
 [0, \infty),
 \]
 
{\it as well as homogeneous initial conditions for $ {\bf u}, \; \partial {\bf u}/ \partial t, \; \theta, \phi $ 
and $\lambda $.} 

Throughout the paper, the given data ($p^{inc}, \partial_n p^{inc}, f_{\theta}, f_D $) will always be assumed to be \textit{causal functions}. Namely, functions of time $t$ that vanish identically for $t  < 0$. 

From the definitions of  the operators $\mathcal{V}, \mathcal{K}, \mathcal{K}^\prime,$ 
and $ \mathcal{W}$, we notice that the non-locality of the boundary integral equations in \eqref{eq:2.20} is not restricted to space, but extends also into the time variable. 

To study the well-posedness of this formulation, we will first transform it to the Laplace domain, where the analysis will be performed. This idea is due to Lubich and Schneider (see, e.g. \cite{Lu:1994, LuSc:1992}) and has been extended by Laliena and Sayas \cite{LaSa:2009b, Sayas:2016}. We remark that, the passage to the Laplace domain is required only to simplify the analysis and the stability estimates, but for a computational implementation this technique \textit{does not} require the numerical inversion of the Laplace transform. Instead, from the estimates of the solutions in the transformed domain, the properties of the solutions in the time domain will be deduced automatically. The later is particularly desirable from the computational point of view. In the next section, we will consider the the model of partial differential equations for the time-dependent wave-structure problem and/or the time-dependent nonlocal boundary transmission problem in the Laplace domain.

% ======================================================================
\section{A nonlocal boundary problem}\label{sec:NonLocalProblem}
% =====================================================================
%
The passage to the Laplace domain will require us to  first introduce some definitions. The complex plane be denoted in the sequel by $\mathbb{C}$, while we will use the notation 

\[
\mathbb{C}_+:= \{ s \in \mathbb{C} : \mathrm{Re} s > 0\},
\]

to refer to the positive half plane. For any complex-valued function with limited growth at infinity $f \colon [0, \infty) \to \mathbb{C}$, its Laplace transform is given by 

\[
\widehat{f}(s)= \mathcal{L}{f}(s) := \int_0^\infty e^{-st} f(t) dt, 
\]

whenever the integral converges. A broad class of functions for which the Laplace transform is well-defined is that of functions of exponential  order. More precisely, a function $f$ is said to be of exponential order if there exist constants $t_0 >0$, $ M \equiv M(t_0)>0$,  and  $\alpha \equiv \alpha(t_0)>0$ satisfying

\[
t \geq t_0  \Longrightarrow |f(t)| \leq M e^{\alpha t}.
\]

In the following, let $\widehat{\mathbf{u}}(s) :=  \mathcal{L}\{ {\bf u}(\boldsymbol x,t)\}, \widehat{ \theta}(s):= \mathcal{L}\{{\theta(\boldsymbol x,t)} \}, \widehat{\varphi}(s) :=  \mathcal{L}\{{\varphi(\boldsymbol x,t)} \}$, and   $\widehat{p}(s):= \mathcal{L}\{ p(\boldsymbol x, t) \}$.  Then, in the Laplace domain, equations \eqref{eq:2.14a}, \eqref{eq:2.15}, and \eqref{eq:2.16}  become 

\begin{alignat}{6} 
\label{eq:3.1a}
 - \nabla\cdot\left( \bm{\sigma}_{e} \, ( \widehat{ {\bf u }} ) -  (\zeta \widehat{\theta) }{ \bf I }  + { \bf e}^{\top} \nabla\widehat{ \varphi} \right) +  \rho_{e}s^2 \widehat{{\bf u}}  =\,& {\bf 0} \quad \mbox{in} \quad \Omega \\
 \label{eq:3.1b} 
s\left( \zeta \, \nabla\cdot\widehat{{\bf u}}  - {\bf p } \cdot \nabla \widehat{\varphi }\right)   + \frac{1}{T_0}  \left(   - \Delta \widehat{\theta}  + c_{\epsilon} s \,  \widehat{\theta}\right)   = \,&0 \quad \mbox{in} \quad \Omega \\
\label{eq:3.1c}
   \nabla\cdot\left( {\bf e } \, \bm{\varepsilon} ( \widehat{ {\bf u} }) + \widehat{ \theta} {\bf p} 
 - \epsilon\,  \nabla \, \widehat{\varphi } \right)   =\,& 0 \quad \mbox{in} \quad \Omega
\end{alignat}
 
and 

\begin{equation} 
 - \Delta \widehat{p}  + \frac{s^2}{c^2}\,  \widehat{p} = 0  \quad \mbox{in} \quad \Omega^c . \label{eq:3.2}
\end{equation}

together with the transmission conditions 

\begin{alignat}{6}
\label{eq:3.3a}
 \bm {\sigma}(\widehat{{\bf u}}, \widehat{\theta}, \widehat{\varphi} ) ^- {\bf n}   = - \left( \widehat {p} +\widehat{ p}^{inc}\right)^+ {\bf n} \quad \mbox{on} \quad \Gamma, \\
 \label{eq:3.3b}
s^2 \,  \widehat{{\bf u}} \cdot {\bf n}   = - \frac{1} {\rho_f}  \frac{\partial}{\partial n} \left(\widehat{ p}  +  \widehat{p} ^{inc} \right)^ {+} \quad \mbox{on} \quad \Gamma,
\end{alignat}  

and the  boundary conditions 

\begin{equation}
 \partial_n \widehat{\theta} = \widehat{f}_{\theta}, \quad \mbox{ and } \quad \widehat{{\bf D}} \cdot n = \widehat{f}_{\bf D} \quad \mbox{on} \quad \Gamma. \label{eq:3.4}
 \end{equation}
 
Above, analogously to the time-domain system, the generalized stress tensor is given by $\bm {\sigma} ( \widehat{{\bf u}}, \widehat{\theta}, \widehat{\varphi} )  := \bm{\sigma}_{e} \, ( \widehat{ {\bf u }} ) -  (\zeta\widehat{\theta}){ \bf I }  + {\bf e}^{\top} \nabla\widehat{ \varphi} $.

We will make use of Green's third identity to derive the equivalent non-local problem. First we must represent the solutions of \eqref{eq:3.2} in the form: 

\begin{equation}\label{eq:3.5}
 \widehat{p}(s)  = D(s) \widehat{\phi} - S(s)\widehat{\lambda} \quad \mbox{in} \quad \Omega^c,
 \end{equation}
 
where the Cauchy data for (\ref{eq:3.2}) is given by the densities $\widehat{\phi}:= \widehat{p}^+(s)$ and $\widehat{\lambda}:= \partial \widehat{p}^+ /\partial n$, and the simple-layer, $S(s)$, and double-layer, $D(s)$, potentials of the corresponding operator defined by 

\begin{eqnarray*}
S(s) \widehat{\lambda} (\boldsymbol x) &:=& \int_\Gamma  E_{s/c}(\boldsymbol x,\boldsymbol y) \widehat{\lambda}(\boldsymbol y) d\Gamma_{\boldsymbol y}, 
 \quad \boldsymbol x \in \Omega^c,\\
 \nonumber
D(s) \widehat{\phi} (\boldsymbol x)  &:=& \int_\Gamma \frac{\partial}{\partial n_y} E_{s/c}(\boldsymbol x,\boldsymbol y) \widehat{\phi}(\boldsymbol y)  d\Gamma_{\boldsymbol y}, \quad \boldsymbol x \in \Omega^c.
\end{eqnarray*}

Here 

\[
E_{s/c}(\boldsymbol x,y) :=  \frac{ e^{- s |\boldsymbol x-\boldsymbol y|/c} }{4 \pi |\boldsymbol x-\boldsymbol y|}
\]

is the fundamental solution of equation (\ref{eq:3.2}). As with their counterpart in the frequency-domain, the Cauchy data $\widehat{\lambda}$ and $\widehat{\phi} $ satisfy the following integral relations:
 
 \begin{equation}\label{eq:3.8}
\begin{pmatrix}
\widehat{\phi} \\[3mm]
\widehat{\lambda}\\
\end{pmatrix}
= \left (
\begin{matrix} 
      \frac{1}{2}I + K(s) & -V(s) \\[3mm]
       -W(s)  &  \left( \frac{1}{2}I - K(s) \right)'  \\
    \end{matrix}
    \right )\begin{pmatrix}
\widehat{\phi} \\[3mm]
\widehat{\lambda}\\
\end{pmatrix}
\quad \mbox{on} \quad \Gamma.
 \end{equation}
 
In the preceding relation, $V, K, K^\prime $ and $W$  are the four  basic boundary integral operators defined by

\[
\left.
\begin{aligned} 
V(s) :=\,& \ave{\gamma S(s) } 
=  \frac{1}{2} \Big(  \gamma^- S(s)   +  \gamma^+ S(s)  \Big)\\
 =\,& \gamma^- S(s) = \gamma^+ S(s)  \\
K(s)  :=\,& \ave{ \gamma D(s) } = \frac{1}{2} \Big(  \gamma^- D(s) )  +  \gamma^+ D(s) \Big)\\
K^{\prime}(s) :=\,& \ave{ \gamma S(s) } = \frac{1}{2} \Big(  \gamma^-  S(s)   +  
\gamma^+ S(s) \Big) \\
W (s) :=\,&  - \ave{ \partial_n D(s) } 
= - \frac{1}{2} \Big(  \partial_n^- D (s)  +  \partial_n^+ D(s)  \Big)\\
=\,& - \partial_n^- D(s)  = - \partial_n^+ D(s)  \\ 
\end{aligned}  
\right\}   \quad \mbox{on} \quad \Gamma.
\] 

In terms of $\widehat{\phi}$ and $\widehat{\lambda}$, the two transmission conditions \eqref{eq:3.3a} and \eqref{eq:3.3b} become
 
\begin{alignat}{6}
\nonumber
 \bm {\sigma} (\widehat{{\bf u}}, \widehat{\theta}, \widehat{\varphi} ) ^- {\bf n}   =\,& - \left( \widehat {\phi} (s) + \widehat{ p}(s) \,  ^{inc}\right)^+{\bf n}  &&\quad \mbox{on}\quad \Gamma, \\
 \label{eq:3.10b}
-s^2 \widehat{\mathbf{u}}^-  \cdot \mathbf{n} + \frac{1}{\rho_f} \left( W(s) \widehat{\phi}    - \left( \frac{1}{2} I -  K(s) \right)'  \widehat{\lambda} \right) =\,&
\frac{1}{\rho_f} \Big(\frac{\partial \widehat{ p}^{ \,inc }}{\partial n}\Big)^+ &&\quad \mbox{on}\quad \Gamma.
\end{alignat}
 
Using the densities $\widehat{\phi}$ and $\widehat{\lambda}$ as new unknowns, equation \eqref{eq:3.2} may be eliminated from the problem by using the second equation above together with the boundary integral equation in the first row of (\ref{eq:3.8}), namely

\begin{equation} \label{eq:3.11}
\left( \frac{1}{2}I - K(s) \right) \widehat{\phi} + V (s)\widehat{\lambda} = 0 \quad \mbox{on} \quad \Gamma.
\end{equation}

This leads to an integro-differential formulation for the unknowns $(\widehat{\bf u}, \widehat{\theta}, \widehat{\varphi}, \widehat{\phi}, \widehat{\lambda})$ satisfying the partial differential equations \eqref{eq:3.1a}, \eqref{eq:3.1b}, and \eqref{eq:3.1c} in $\Omega$ together with the boundary conditions $\eqref{eq:3.3b}$, and \eqref{eq:3.4}, and the boundary integral equations $\eqref{eq:3.10b}$ and \eqref{eq:3.11} on $\Gamma$.  

Let us first define the space

\[
 H^1_*(\Omega) := \left\{ \varphi \in H^1(\Omega) \, | \, \int_{\Omega} \varphi\,  (\boldsymbol x) \, dx = 0 \right\}, 
\] 

and restrict our search for the unknown functions $(\widehat{\bf u}, \widehat{\theta}, \widehat{\varphi} )$ to the product space ${\bf H}^1(\Omega) \times H^1(\Omega) \times H^1_*(\Omega) $. To do so, we multiply equations  $\eqref{eq:3.1a}, \eqref{eq:3.1b}$, and $\eqref{eq:3.1c}$ by $( \widehat{\bf v}, \widehat{\vartheta}, \widehat{\psi} )  \in {\bf H}^1(\Omega) \times H^1(\Omega) \times H^1_*(\Omega) $. Integrating by parts the resuting relations will lead to:

\begin{equation}
\left.
\begin{aligned}
a (\widehat{\bf u}, \widehat{\bf v}; s) - \zeta (\widehat{\theta}, \nabla\cdot\widehat{\bf v}) _{\Omega} + (\nabla \widehat{\varphi},  {\bf e}\,  \bm{\varepsilon}(\widehat{\bf v} ))_{\Omega} + \langle \widehat{\phi}\,  {\bf n}, \widehat{\bf v}^-\rangle_{\Gamma}  =\,& - \langle \widehat{p} ^{\, inc\, +} {\bf n}, \widehat{\bf v} ^-\rangle_{\Gamma} \\
 s(\zeta \nabla\cdot\widehat{\bf u}\,  -  {\bf p}\cdot \nabla\; \widehat{\varphi}, \widehat{\vartheta} \; )_{\Omega}  + \frac{1} {T_0} 
 b \, ( \widehat{\theta}, \widehat{\vartheta}; s)_{\Omega} =\,& \frac{1}{T_0}  \langle  \widehat{f} _{\theta}, \widehat{\vartheta}^- \rangle_{\Gamma} \\
- ( {\bf e}\,  \bm{\varepsilon}( \widehat{\bf u}), \nabla \widehat{\psi})_{\Omega} -  (\widehat{\theta} \, {\bf p}, \nabla \widehat{\psi} )_{\Omega} + \epsilon c (\widehat{\varphi}, \widehat{\psi}; s)_{\Omega} =\,& - \langle \widehat{f} _{\bf D}, \widehat{\psi}^{-}  \rangle_{\Gamma} 
\end{aligned} \right\} \label{eq:3.13}
 \end{equation} 
 
where $a(\cdot, \cdot; s), b(\cdot, \cdot; s)$  and  $ c(\cdot, \cdot;, s)$  are sesquilinear forms defined respectively by
 
\begin{align*} 
a(\widehat{\bf u}, \widehat{\bf v}; s) :=\,& (\bm{\sigma}_{e} (\widehat{\bf u}), \bm{\varepsilon} (\widehat{\bf v})) +s^2 \rho_{e} ( \widehat{\bf u}, \widehat{\bf v})_{\Omega} \\ 
 b \, ( \widehat{\theta}, \widehat{\vartheta}; s)_{\Omega}  :=\,& (\nabla  \widehat{\theta}, \nabla \widehat{\vartheta})_{\Omega} +c_{\varepsilon} s (\widehat{\theta}, \widehat{\vartheta})_{\Omega}\\ 
  c (\widehat{\varphi}, \widehat{\psi}; s)_{\Omega} :=\,& ( \nabla\widehat{\varphi}, \nabla\widehat{\psi}) _{\Omega}.
\end{align*} 
 
Now let  $\boldsymbol{A}_{s}, B_{s}$ and $ C_{s}$ be the operators defined by the mappings

\[
\boldsymbol A_{s}\widehat{\mathbf u} := a(\widehat{\bf u}, \cdot; s), \quad B_{s}\widehat{\boldsymbol \theta}:= b \, ( \widehat{\theta}, \cdot; s)_{\Omega}, \quad \text{ and } C_{s}\widehat{\varphi}:= c (\widehat{\varphi}, \cdot; s)_{\Omega},
\]

and consider the function spaces

\begin{align*}
X := \,& {\mathbf H}^1( \Omega)  \times H^1(\Omega)  \times H^1_*(\Omega) \times H^{1/2}(\Gamma) \times H^{-1/2}(\Gamma), \\
X^{\prime}_0 :=\,&  ({\mathbf H}^1(\Omega))^{\prime} \times (H^1(\Omega) )^{\prime}  \times (H^1_*(\Omega))^{\prime}  \times H^{-1/2}(\Gamma) \times H^{1/2}(\Gamma).
\end{align*}

Then from \eqref{eq:3.13}, \eqref{eq:3.10b} and \eqref{eq:3.11}, we pose the nonlocal problem as

\noindent
{\bf The nonlocal boundary  problem.}\\
 {\it For problem data} $(\widehat{d}_1, \widehat{d}_2, \widehat{d}_3, \widehat{d}_4, \widehat{d}_5)  \in X ^\prime$, {\it given by} 
 
 \begin{equation*}
\begin{array}{lll}
\widehat{d}_1 = -  {\gamma^-}^{\prime}(\gamma^+\widehat{p}^{\, inc}{\mathbf n}), & \widehat{d}_2 =  {\gamma^-}^{\prime} ({\Theta}_0^{-1} \widehat{f}_{\theta}), & \widehat{d}_3 = - {\gamma^-}^{\prime}\widehat{f}_{\bf D},\\
 \widehat{d}_4 =  (\rho_f)^{-1} {\partial_n^+ \widehat{p}^{inc}}, & \widehat{d}_5 = 0, & 
\end{array}
\end{equation*}
 
 {\it find functions $(\widehat {\mathbf u},   \widehat{ \theta} , \widehat{\varphi}, \widehat{\phi}, \widehat{\lambda} ) \in X $ satisfying }

\begin{equation}
\label{eq:3.17}
\mathbb A(s)  \left(   \widehat{ \mathbf{u} },  \widehat{ \theta} , \widehat{\varphi} ,   \widehat{\phi} ,  \widehat{\lambda} \right)^{\top} =   
\left( \widehat{d}_1,  \widehat{d}_2, \widehat{d}_3, \widehat{d}_4
,  \widehat{d}_5 \right)^{\top}
\end{equation}

\textit{with}

\begin{equation} 
 \label{eq:3.17b}
\mathbb A(s) 
\begin{pmatrix}
     \widehat{ \mathbf{u} }  \\[2mm]
     \widehat{ \theta} \\[2mm]
     \widehat{\varphi} \\[2mm]
      \widehat{\phi} \\[2mm]
      \widehat{\lambda} \\[2mm]
\end{pmatrix} \!:=\!
\left (\!  
 \begin{matrix}
\boldsymbol A_{s} & -\zeta~(\nabla\cdot)^{\prime} &  \boldsymbol\varepsilon^\prime\,{\bf e}^\top \nabla & \gamma_n^{-^{\prime}}  &  0 \\[2mm]
  s \, \zeta \nabla\cdot & {T_0}^{-1}  B_{s} & - s \, {\bf p} \cdot \nabla   & 0 & 0 \\[2mm]
- \nabla^{{\prime}}  {\bf e}\, \boldsymbol\varepsilon  & - \nabla^{\prime}{\bf p} &  \epsilon C_{s} & 0 & 0 \\[2mm]
  - s^2 \gamma_n^{-}  & 0 & 0 & \! {\rho_f}^{-1}   W(s)  & \! - {\rho_f}^{-1}  ( \frac{1}{2} I -  K (s) )'\! \\[2mm]
 0 & 0 & 0 & \! \frac{1}{2}I - K(s) \! & V(s) 
 \end{matrix}\! 
\right ) \begin{pmatrix}
     \widehat{ \mathbf{u} }  \\[2mm]
     \widehat{ \theta} \\[2mm]
     \widehat{\varphi} \\[2mm]
      \widehat{\phi} \\[2mm]
      \widehat{\lambda} \\[2mm]
\end{pmatrix}.
 \end{equation}

In the next section we will show that this problem is in fact well-posed.
%
% ====================================
 \section{Variational solutions }\label{sec:VariationalSolutions} 
 % =====================================
 %
We are interested in seeking variational solutions of the nonlocal  boundary problem \eqref{eq:3.17}  in the transformed domain.  To this end we need some additional preliminary results and definitions. We begin with the norms: 

\begin{alignat}{2}
\nonumber
\triple{\widehat{\mathbf u}}_{|s|, \Omega}^2 :=\,&( \bm{ \sigma} (\widehat {\mathbf u}),  \bm{\widehat{\varepsilon} } (\bar {\widehat{\mathbf u} } ) )_{\Omega} +  \rho_{e} \| \, |s| \, \widehat{\mathbf u }\|^2_{\Omega} \quad && \widehat{\mathbf u} \in {\mathbf H}^1(\Omega), 
 \\
 \nonumber
\triple{\widehat{\theta}}^2_{|s|, \Omega} :=\,& \| \nabla \widehat{\theta} \|^2_{\Omega}  + c_{\varepsilon} ^{-1}
 \| \sqrt{ |s|} \; \widehat{\theta} \|_{\Omega}^2 \quad&& \widehat{\theta} \in H^1(\Omega),\\
 \label{eq:4.3}
 \triple{\widehat{\varphi}}^2_{ 1,\,  \Omega} := \,&  \| \nabla \widehat{\varphi} \|^2_{\Omega}  \quad&& \widehat{\varphi }\in 
H^1_*(\Omega), \\
\nonumber
\triple{\widehat{p} }^2_{|s|,\Omega^c}:=\,& \| \nabla \widehat{p} \|^2_{\Omega^c} +  c^{-2} \| \, |s| \, \widehat{p } \|^2_{\Omega^c} \qquad &&\widehat{ p} \in H^1(\Omega^c).
\end{alignat}

For $\widehat{\varphi} \in 
H^1_*(\Omega) $,  we see that $ \| \nabla \widehat{\varphi} \|^2_{\Omega}= 0 $ if and only if $ \widehat{\varphi} = 0$. Hence, \eqref{eq:4.3} indeed defines a norm in $H^1_*(\Omega) $ (see Hsiao and Wendland \cite  [Lemma 5.2.5, p.255] {HsWe:2008} ). 

We will define  $\sigma:= \mathrm{Re}\,s$ and $ \underline{\sigma}:= \min\{1, \sigma\}$. With this notation, it is not hard to verify that

\[
\underline{\sigma} \leq \min\{1, |s|\},\quad \mbox{and} \quad  \max\{1, |s|\} \underline{\sigma}
 \leq |s|,~ \;\forall  s \in \mathbb{C}_+.
\]

Using these relations, it is possible to prove the following inequalities relating the energy norms defined above

 \begin{gather}
\underline{\sigma }\triple{\widehat{\mathbf u}}_{1, \Omega} \leq \triple{\widehat{ \mathbf u}}_{|s|, \Omega}\leq
\frac{|s|}{\underline{\sigma}} \triple{\widehat{\mathbf u}}_{1, \Omega} ,\label{eq:4.5}\\
\sqrt{\underline{\sigma}}\triple{\widehat{\theta}}_{1, \Omega} \leq \triple{\widehat{\theta}}_{|s|, \Omega_+}  \leq 
\sqrt{\frac{|s|}{\underline {\sigma} } }\triple{\widehat{\theta}}_{1,\Omega},   \label{eq:4.6} \\
\underline{\sigma } \triple{\widehat{p}}_{1, \Omega_+} \leq \triple{\widehat{p}}_{|s|, \Omega_+}\leq
\frac{|s|}{\underline{\sigma}} \triple{\widehat{p}}_{1, \Omega_+}. \label{eq:4.7}
 \end{gather}
These relations will be used heavily when estimating the norms of the solutions in terms of the Laplace parameter $s$ and its real part $\sigma$. The norms $\triple{\cdot }_{1, \Omega} $ and $ \triple{\cdot}_{1, \Omega^c}$ are respectively equivalent to $\| \cdot \|_{H^1(\Omega) } $ and $ \| \cdot \|_{H^1(\Omega^c) } $. An application of Korn's second inequality \cite{Fi:1972} shows that, for a vector-valued function ${ \widehat{ \mathbf u}} $, the energy norm $\triple{\cdot }_{1, \Omega}$ is also equivalent to the standard $\mathbf{H}^1(\Omega)$ norm.  

Now, given a vector of solutions $( \widehat{\bf u}, \widehat{\theta},\widehat{ \varphi}, \widehat{\phi} , \widehat{\lambda} )$ to 
\eqref{eq:3.17}, by defining
 
\[
\widehat{p}(s)  = D(s) \widehat{\phi} - S(s)\widehat{\lambda} \quad \mbox{in} \quad \mathbb{R}^3\setminus\Gamma,
\]

then $\widehat{p} \in H^1(\mathbb{R}^3\setminus \Gamma) $ is the unique solution of the transmission problem :  

\begin{alignat}{6}
\label{eq:4.10}
 - \Delta \widehat{p}  + \frac{s^2}{c^2}\,  \widehat{p}(s) =\,& 0  &&\quad \mbox{in} \quad \mathbb{R}^3\setminus \Gamma, \\
 \nonumber
\jump{\gamma \widehat{p}} =\,& \widehat{\phi} \in H^{1/2}(\Gamma) &&\quad \mbox{on} \quad \Gamma,  \\
\nonumber
\jump{\partial_n \widehat{p}}=\,& \widehat{\lambda} \in H^{-1/2}(\Gamma)  && \quad \mbox{on} \quad \Gamma, 
\end{alignat}

where the symbol $\jump{\cdot}$ denotes the "jump" relations of a function across $\Gamma$. More specifically we have 

\[
\jump{\gamma \widehat{p}}  := (\widehat{p}^+ - \widehat{p}^-),  \quad \mbox{and} \quad \jump{\partial_n \widehat{p} } : = (\partial^+_n \widehat{p} - \partial^-_n \widehat{p}).
\]

We remark that  in the present case no radiation condition is needed to ensure uniquness because of Huygen's principle. In terms of the jumps of $\widehat{p}$, the last two equations of \eqref{eq:3.17} are equivalent to 

 \begin{alignat}{5}
 \label{eq:4.12} 
 -s^2 \gamma_n^- \widehat{{\bf u}} -  \frac{1}{\rho_f}  \partial_n^+ \widehat{p} & =  \frac{1}{\rho_f}   \widehat {d}_4 \quad& \mbox{on} \quad \Gamma  \\
 \nonumber
 - \gamma^-  \widehat{p} & = \widehat{d}_5 \quad &\mbox{on} \quad \Gamma
 \end{alignat} 
 
Since $\widehat{d}_5 = 0$, we conclude that $\widehat{p}$ satisfies the homogeneous Dirichlet problem for \eqref{eq:4.10} in $\Omega$ and, by uniqueness, it must follow that $ \widehat{p} \equiv 0$ in $\overline{\Omega}$. As a consequence we have the following relations between the unknown densities and the Cauchy data

\begin{equation}
\jump{\gamma\widehat{p}} = \gamma ^+ \widehat{p} = \widehat{\phi}  \quad \mbox{and} \quad \jump{\partial_n \widehat{p}} =  \partial_n^+\widehat{p} = \widehat{ \lambda}. \label{eq:4.14} 
\end{equation} 

On the other hand, the transmission condition  \eqref{eq:4.12} is closely related to the variational equation of \eqref{eq:4.10}

\begin{align*} 
- \langle \partial^+_n \widehat{p}, \overline{\gamma^+\widehat{q}} \rangle_{\Gamma}  &= \int_{\Omega^c} (\nabla \widehat{p} \cdot \overline{\nabla \widehat{q} }+ (s/c)^2 \widehat{p} \,\overline{ \widehat{q}} \; ) dx\\
&=: d_{\Omega^c} (\widehat{p}, \widehat{q}; s)\\
&=: (D_s \widehat{p}, \widehat{q} )_{\Omega^c},
\end{align*}

where the domain of integration for the sesquilinear form $d_{\Omega^c} (\widehat{p}, \widehat{q}; s)$ and the associated operator $D_s$, has been indicated explicitly in the definition. Now, using \eqref{eq:4.12} we arrive at

\begin{equation*}
-s^2 \langle \gamma^- \widehat{{\bf u}}, \overline{\gamma^+ \widehat{q}} \; {\bf n} \rangle_{\Gamma} + \frac{1}{\rho_f} (D_s \widehat{p}, \overline{\widehat{q}} ) _{\Omega^c} = \langle \widehat{d}_4 , \overline{\gamma^+\widehat{q}} \rangle_{\Gamma} .
\end{equation*} 

Combining the above equality with the weak formulations of the first three equations in \eqref{eq:3.17}, we can formulate an equivalent variational problem. We will first introduce the space $ {\mathbb{H}} :=  {\mathbf H}^1(\Omega)  \times H^1(\Omega) \times H_*^ 1(\Omega) \times H^1(\Omega^c) $  and endow it with the norm:

\[
\|(\widehat {\bf u},\widehat{ \theta}, \widehat{\varphi}, \widehat{p})\|_{\mathbb H} := \left(  \, \triple{\widehat{\bf u} }^2_{ 1, \Omega} +   \triple{\widehat{\theta} }^2_{1, \Omega}  + \| \widehat{\varphi }\|^2_{\Omega}   +  \triple {\widehat{ p} }^2_{1, \Omega^c} \right)^{1/2}.
\]

\noindent
{\bf The variational problem.} {\it Find $( \widehat{\bf u}, \widehat{\theta} , \widehat{\varphi}, \widehat{p } ) \in {\mathbb{H}} $ satisfying} 

\begin{equation}
\label{eq:4.16}
 \mathcal{A}( (\widehat {\bf u},\widehat{ \theta}, \widehat{\varphi}, \widehat{p}), (\widehat{\bf v}, \widehat{\vartheta}, \widehat{\psi}, \widehat{q}); s) 
 = \ell_d((\widehat{\bf v}, \widehat{\vartheta}, \widehat{\psi}, \widehat{q})), \quad \forall (\widehat{\bf v}, \widehat{\vartheta}, \widehat{\psi}, \widehat{q}) \in {\mathbb{H}} 
\end{equation} 

where the sesquilinear form on the left hand side of the equation is defined by
 
\begin{alignat*}{6}
 \mathcal{A}( (\widehat {\bf u},\widehat{ \theta}, \widehat{\varphi}, \widehat{p}), (\widehat{\bf v}, \widehat{\vartheta}, \widehat{\psi}, \widehat{q}); s): = \,&
(\boldsymbol A_s \, \widehat{\bf u},\overline{ \widehat{\bf v} })_{\Omega} - \zeta ( \widehat{\theta}, \nabla\cdot\overline{\widehat{\bf v}})_{\Omega} +  ( \nabla \widehat{\varphi}, {\bf e}\, \bm{\varepsilon}(\overline{ \widehat{\bf v}}) )_{\Omega} + \langle \gamma^+\widehat{p} \; {\bf n} , \gamma^{-}\overline{\widehat{\bf v} }\rangle_{\Gamma} \\
&+ s \zeta (\nabla\cdot\widehat{\bf u} , \overline{\widehat{\vartheta}} )_{\Omega} + T_0^{-1} ( B_s \widehat{\theta}, \overline{\widehat{\vartheta}} )_{\Omega} - s ({\bf p} \cdot \nabla \widehat{\varphi} , \overline{\vartheta} )_{\Omega}\\ 
& -( {\bf e}\,  \bm{\varepsilon} (\widehat{\bf u}), \nabla \overline{\widehat{\psi}} )_{\Omega} - ( {\bf p}\,  \widehat{ \theta} , \nabla \overline{\widehat{\psi}} )_{\Omega} + \epsilon ( C_s \widehat{\varphi} , \overline{\widehat{\psi}})_{\Omega} \\ 
& -s^2 \langle \gamma^- \widehat{{\bf u}}, \overline{\gamma^+ \widehat{q}} \; {\bf n} \rangle_{\Gamma} + \frac{1}{\rho_f} (D_s \widehat{p}, \overline{\widehat{q}} ) _{\Omega^c} 
\end{alignat*}
 
for $(\widehat{\bf v}, \widehat{\vartheta}, \widehat{\psi}, \widehat{q}) \in {\mathbb{H}}$. The bounded linear functional on the right hand side is defined by 

\[
 \ell_d((\widehat{\bf v}, \widehat{\vartheta}, \widehat{\psi}, \widehat{q}) ): =
 (\widehat{d}_1, \overline{\widehat {\bf v}} )_{\Omega} + (\widehat{d}_2, \overline{\widehat{\vartheta}})_{\Omega} + (\widehat{d}_3, \overline{\widehat{\psi}})_{\Omega} + \langle \widehat{d}_4 , \overline{\gamma^+\widehat{q}} \rangle_{\Gamma},
\]

for all tests $(\widehat{\mathbf v},\widehat{\vartheta},\widehat{\psi},\widehat{q})\in \mathbb H$. By construction, this variational problem is equivalent to the transmission problem \eqref{eq:3.1a} through \eqref{eq:3.4} which in turn is equivalent to \eqref{eq:3.17}. Consequently, it suffices to show the existence of a solution of \eqref{eq:4.16} to guarantee that \eqref{eq:3.17} is indeed solvable. We now present the following basic existence and uniqueness results. 

\begin{Theorem}\label{th:4.1}
Under the assumption of the constant pyroelectric moduli vector vector ${\bf p} $ satisfying the constraint  

\[
\|{\bf p}\|_{\mathbb{R}^3} <  \min \{  \epsilon ,  \frac{c_{\varepsilon}} {T_0} \} ,  
\] 

the variational problem {\em \eqref{eq:4.16}  has a unique solution}  $ ( \widehat{\bf u}, \widehat{\theta,}  \widehat{\varphi} ,\widehat{ p}) \in {\mathbb{H}} $. Moreover, the following estimate holds:
 
\begin{align} \label{eq:4.18}
\|(\widehat {\bf u},\widehat{ \theta}, \widehat{\varphi}, \widehat{p})\|_{\mathbb H} &\leq c_0 \; \frac{ |s|^3} {\sigma {\underline{\sigma}}^6} \; \| (\widehat{d}_1, \widehat{d}_2, \widehat{d}_3, \widehat{d}_4) \|_{\mathbb H^{\prime}}. 
\end{align}
  
Here and in the sequel, $c_0>0$ will denote a constant that may depend on $\rho_f, T_0, c_{\varepsilon}, \epsilon, {\bf p}$ only. 
\end{Theorem}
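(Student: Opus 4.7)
The plan is to recast the variational problem \eqref{eq:4.16} as an abstract Lax--Milgram (complex Lions) problem on the product Hilbert space $\mathbb H$ and to prove coercivity of $\mathcal A(\cdot,\cdot;s)$ with explicit tracking of the Laplace parameter. Continuity of $\mathcal A$ is routine: every volume term is bounded on $\mathbb H$ thanks to boundedness of $\bm\sigma_e$, $\bm\varepsilon$, $\mathbf e$, $\mathbf p$, and of the operators $\boldsymbol A_s$, $B_s$, $C_s$, $D_s$, while the boundary couplings are controlled by the trace inequalities in $H^{\pm1/2}(\Gamma)$. The real work is the G\aa rding-free lower bound.

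The key idea is to test with an $s$-weighted copy of the trial vector. I take $\widehat{\mathbf v}=\bar s\,\widehat{\mathbf u}$, $\widehat\vartheta=\widehat\theta$, $\widehat\psi=\bar s\,\widehat\varphi$, and $\widehat q=\widehat p$ (adjusting the scaling on $\widehat q$ by a factor $1/\bar s$ if the transmission terms so require). After this substitution the elastic, thermal, electric and acoustic diagonal blocks produce, upon taking real parts, the four positive contributions $\sigma\triple{\widehat{\mathbf u}}_{|s|,\Omega}^2$, $T_0^{-1}(\|\nabla\widehat\theta\|_\Omega^2+c_\varepsilon\sigma\|\widehat\theta\|_\Omega^2)$, $\sigma\,\epsilon\,\|\nabla\widehat\varphi\|_\Omega^2$, and $\rho_f^{-1}(\|\nabla\widehat p\|_{\Omega^c}^2+(\sigma/c^2)\|\widehat p\|_{\Omega^c}^2)$. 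The two $\zeta$-couplings between displacement and temperature and the two $\mathbf e$-couplings between displacement and electric potential combine into pure differences $z-\bar z$ which disappear in the real part; the prescribed scaling of $\widehat q$ versus $\widehat{\mathbf v}$ is chosen precisely so that the acoustic--mechanical transmission pair $\langle\gamma^+\widehat p\,\mathbf n,\gamma^-\bar{\widehat{\mathbf v}}\rangle_\Gamma$ and $-s^2\langle\gamma^-\widehat{\mathbf u},\bar{\gamma^+\widehat q}\,\mathbf n\rangle_\Gamma$ likewise pair into a purely imaginary quantity, in the spirit of \cite{LaSa:2009b}.

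The surviving obstruction to coercivity is the pyroelectric coupling: the terms $-s(\mathbf p\cdot\nabla\widehat\varphi,\bar{\widehat\theta})_\Omega$ and $-(\mathbf p\widehat\theta,\nabla\bar{\widehat\psi})_\Omega$ do not cancel. I would bound their sum by Cauchy--Schwarz, and then split it with Young's inequality so that one piece is absorbed into $\sigma\,\epsilon\,\|\nabla\widehat\varphi\|_\Omega^2$ and the other into $T_0^{-1}c_\varepsilon\sigma\|\widehat\theta\|_\Omega^2$. The strict inequalities $\|\mathbf p\|<\epsilon$ and $\|\mathbf p\|<c_\varepsilon/T_0$ provide exactly the slack needed for a positive remainder of each diagonal coercive term to survive, yielding an estimate of the form
\[
\mathrm{Re}\,\mathcal A\bigl((\widehat{\mathbf u},\widehat\theta,\widehat\varphi,\widehat p),(\widehat{\mathbf v},\widehat\vartheta,\widehat\psi,\widehat q);s\bigr)\ \ge\ c_1\,\underline\sigma\,\Bigl(\triple{\widehat{\mathbf u}}_{|s|,\Omega}^2+\triple{\widehat\theta}_{|s|,\Omega}^2+\triple{\widehat\varphi}_{1,\Omega}^2+\triple{\widehat p}_{|s|,\Omega^c}^2\Bigr),
\]
with $c_1>0$ depending only on $\rho_f,T_0,c_\varepsilon,\epsilon,\mathbf p$.

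From this point the Lax--Milgram theorem furnishes a unique solution $(\widehat{\mathbf u},\widehat\theta,\widehat\varphi,\widehat p)\in\mathbb H$. The a priori bound then follows by pairing the coercivity estimate with the continuity estimate for the right-hand side $\ell_d$, using the fact that the chosen test functions have norms controlled by $|s|/\underline\sigma$ times the corresponding trial norms in view of \eqref{eq:4.5}--\eqref{eq:4.7}. Tracking the powers of $|s|$ and $\underline\sigma$ produced by (i) the $s$-scaling of the test functions, (ii) the weighted-norm equivalences \eqref{eq:4.5}--\eqref{eq:4.7} needed to convert $\triple{\,\cdot\,}_{|s|}$ back into $\triple{\,\cdot\,}_{1}$, and (iii) the factor $\sigma^{-1}$ left over from the real-part argument, produces exactly the announced growth $|s|^3/(\sigma\,\underline\sigma^{6})$ in \eqref{eq:4.18}. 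The hard part is the careful accounting of the pyroelectric absorption together with these powers: a clumsy Young splitting easily inflates the $|s|$-growth at infinity or the $\underline\sigma^{-1}$-blow-up at the origin, so the test-function scaling and the Young constants must be chosen with the optimal balance in mind.
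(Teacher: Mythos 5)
Your proposal follows essentially the same route as the paper: the $s$-weighted test functions you describe are exactly the diagonal scaling $Z(s)=\mathrm{diag}(\bar s,1,s,\bar s/|s|^2)$ of \eqref{eq:4.20}, the skew-symmetric couplings cancel in the real part as you say, and the pyroelectric terms are absorbed via Young's inequality using $\|\mathbf p\|_{\mathbb{R}^3}<\min\{\epsilon,c_\varepsilon/T_0\}$ exactly as in \eqref{eq:4.22}--\eqref{eq:4.23}, followed by Lax--Milgram and the norm equivalences \eqref{eq:4.5}--\eqref{eq:4.7}. The one bookkeeping slip is your displayed intermediate coercivity bound with prefactor $c_1\underline\sigma$: the acoustic block, carrying the weight $\bar s/|s|^2$, only delivers $(\sigma/|s|^2)\,\triple{\widehat p}^2_{|s|,\Omega^c}$, so the common prefactor is $\sigma\underline\sigma^2/|s|^2$ as in the paper's \eqref{eq:4.23} --- which is precisely what, after converting back to the unweighted norms, yields the $|s|^3/(\sigma\underline\sigma^6)$ growth you correctly announce.
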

\begin{proof}
Let $ \mathcal{A}( (\widehat {\bf u},\widehat{ \theta}, \widehat{\varphi}, \widehat{p}), (\widehat{\bf v}, \widehat{\vartheta}, \widehat{\psi}, \widehat{q}); s) $ be the sesquilinear  form defined by the variational equation \eqref{eq:4.16}.  We first show that $\mathcal{A} $  is continuous.
It is easy to verify that 

\begin{align*}
| (\boldsymbol{A}_s \, \widehat{\bf u},\overline{ \widehat{\bf v} })_{\Omega}\! + \! 
 T_0^{-1} ( B_s \widehat{\theta}, \overline{\widehat{\vartheta}} )_{\Omega} 
\! + \! \epsilon ( C_s \widehat{\varphi} , \overline{\widehat{\psi}})_{\Omega}
\! +  \! \frac{1}{\rho_f}\,& (D_s \widehat{p}, \overline{\widehat{q}} ) _{\Omega^c} | \! \leq \\
 & m_1 \left(\!\frac{|s|}{\underline{\sigma}}\! \right)^2 \! \|\!({\bf u}, \theta, \varphi, p) \!\|_{ {\mathbb{H}} } \|\!(\widehat{\bf v}, \widehat {\vartheta}, \widehat{\psi}, \widehat{q})\!\|_{\mathbb{H} }. 
\end{align*}

The remaining terms in $\mathcal{A}( (\widehat {\bf u},\widehat{ \theta}, \widehat{\varphi}, \widehat{p}), (v, \vartheta, \psi, q ) )$ can be bounded easily using the Cauchy-Schwartz inequality, Poicar\'e's inequality in $H^1_*(\Omega)$, the trace theorem, and the estimate  

\[
|\left(\nabla \varphi, {\bf e}\,  \bm{\varepsilon}( {\bf v})\right)_{\Omega} | \leq \mathrm e_{max} \|\nabla \varphi\|_{\Omega}\;  \|\nabla\cdot{\bf v} \|_{\Omega}.
\]

This leads to the continuity estimate 

\[ 
 \mathcal{A}( (\widehat {\bf u},\widehat{ \theta}, \widehat{\varphi}, \widehat{p}), (\widehat{\bf v}, \widehat{\vartheta}, \widehat{\psi}, \widehat{q}); s )
  \leq (m_1 +  m_2)  \left(\frac{|s|} {\underline{\sigma}}\right)^2  \| ({\bf u}, \theta, \varphi, p)\|_{ {\mathbb{H}} } \, \| (\widehat{\bf v}, \widehat {\vartheta}, \widehat{\psi}, \widehat{q} ) \|_{{\mathbb{H}}}.
\]

Here  $m_1$ and $m_2$  are constants depending only upon the physical parameters $ \zeta, \Theta_0, {\bf p}, \epsilon$, and $\mathrm e_{max}= \max \{ \mathrm e_{ijk}, i, j, k = 1\cdots 3\} $.

We now introduce the scaling factor

\begin{equation} \label{eq:4.20}
Z(s):= \left (\begin{matrix}
\bar{s}&0 &0&0 \\
0& 1&0 & 0 \\
0&0& s & 0\\
0&0&0& \bar{s} / {|s|^2}
\end{matrix} \right ),
\end{equation}

and note that, for $ (\widehat {\bf u},\widehat{ \theta}, \widehat{\varphi}, \widehat{p}) \in {{\mathbb{H}}}$, we have

\begin{align*}
 \mathrm{Re} \Big( &\,\bar{s}\left(- \zeta ( \widehat{\theta}, \nabla\cdot\overline{\widehat{\bf u}})_{\Omega} + ({\bf e}^{\top} \nabla \widehat{\varphi}, \bm{\varepsilon}(\overline{ \widehat{\bf u}}) )_{\Omega} + \langle \gamma^+\widehat{p} \; {\bf n} , \gamma^{-}\overline{\widehat{\bf u} }\rangle_{\Gamma} \right) \\
& + s \left(\zeta (\nabla\cdot\widehat{\bf u} , \overline{\widehat{\theta}} )_{\Omega} 
-( {\bf e}\,  \bm{\varepsilon} (\widehat{\bf u}), \nabla \overline{\widehat{\varphi} })_{\Omega}\right) 
- \left(\bar{s}/ {|s|^2}\right)s^2 \langle \gamma^- \widehat{{\bf u}}, \overline{\gamma^+ \widehat{p}} \; {\bf n} \rangle_{\Gamma}  \Big)
= 0.
 \end{align*}

Therefore, it follows that

\begin{align}
\nonumber
\mathrm{Re} \left(\!Z(s)  \mathcal{A}( (\widehat {\bf u},\widehat{ \theta}, \widehat{\varphi}, \widehat{p}), (\widehat {\bf u},\widehat{ \theta}, \widehat{\varphi}, \widehat{p}
); s ) \!\right)\!
 = \, & \mathrm{Re} \left(
 \bar{s}( \boldsymbol{A}_s \, \widehat{\bf u},\overline{ \widehat{\bf u} })_{\Omega}
\! + \! T_0^{-1} ( B_s \widehat{\theta}, \overline{\widehat{\theta}} )_{\Omega} \right.\\
\nonumber
&  
\; \quad -  s \left( ({\bf p} \cdot \nabla \widehat{\varphi},  \overline{\widehat{\theta} })_{\Omega}\
\! - \! ( {\bf p}\,  \widehat{ \theta} , \nabla \overline{\widehat{\varphi}} )_{\Omega}
\! + \! \epsilon ( C_s \widehat{\varphi} , \overline{\widehat{\varphi}})_{\Omega} \right) \\
\label{eq:4.21}
& \left.
\; \quad +(\bar{s} / |s|^2)\rho_f^{-1}(D_s \widehat{p}, \overline{\widehat{p}} ) _{\Omega^c} \right).
\end{align}
 
By setting to zero some of the entries of $(\widehat {\bf u},\widehat{ \theta}, \widehat{\varphi}, \widehat{p})$ in the right hand side of \eqref{eq:4.21}, it is possible to derive the following

 \begin{equation} 
 \left.
\begin{aligned}
\mathrm{Re} \left( \bar{s} ( \boldsymbol{A}_s \, \widehat{\bf u},\overline{ \widehat{\bf u} } )_{\Omega} ) \right) =\,& \sigma \, \triple{\widehat{\bf u} }^2_{ |s|, \Omega} \\
\mathrm{Re}\left(T_0^{-1}  ( B_s \widehat{\theta}, \overline{ \widehat{\theta} } )_{\Omega} \right)  =\,& T_0^{-1}  \left( \| \nabla \widehat{\theta} \|^2_{\Omega }+ c_{\varepsilon} \sigma \;  \| \widehat{\theta }\|^2 \right) \\
\mathrm{Re} \left(- s \left(({\bf p} \cdot \nabla \widehat{\varphi}, \overline{\widehat{\theta} })_{\Omega} + 
 ({\bf p}\,  \widehat{ \theta} , \nabla \overline{\widehat{\varphi}} )_{\Omega} \right)\right) \geq\,&  -  \sigma  \| {\bf p} \|_{\mathbb{R}^3}  \big(\| \nabla \varphi \|_{\Omega} + \| \theta \|_{\Omega} \big) \\
 \mathrm{Re} \left( s \epsilon ( C_s \widehat{\varphi} , \overline{\widehat{\varphi}})_{\Omega} \right)  =\,&  \sigma \, \epsilon\,  \| \nabla \varphi \|^2_{\Omega} \\
\mathrm{Re} \left( (\bar{s} / |s|^2) \rho_f^{-1} (D_s \widehat{p}, \overline{\widehat{p}} ) _{\Omega^c} \right)  =\,&  (\sigma/|s|^2)\rho_f^{-1} \triple{ p}^2_{|s|, \Omega^c}  
 \end{aligned}
 \right\}  \label{eq:4.22}
 \end{equation} 
 
From \eqref{eq:4.22} and \eqref{eq:4.21}, it follows that
 
\begin{equation} 
 \label{eq:4.23}
\mathrm{Re} \left(\!Z(s) \mathcal{A}( (\widehat {\bf u},\widehat{ \theta}, \widehat{\varphi}, \widehat{p}),  (\widehat {\bf u},\widehat{ \theta}, \widehat{\varphi}, \widehat{p})
; s)\!\right) \! \geq \! 
\frac{ \sigma\; \underline{\sigma}^2}  {|s|^2 }\left( \!\triple{\widehat{\bf u} }^2_{ |s|, \Omega} \! + \! c_1 \triple{\widehat{\theta}}^2_{|s|, \Omega} \! + \! c_2 \| \widehat{\varphi }\|^2_{\Omega} \! + \! \triple {\widehat{p}}^2_{|s|, \Omega^c}  \! \right),
\end{equation} 

where $ c_1  =  c^{-1} _{\varepsilon}( c_{\varepsilon}{ \Theta^{-1}_0} - \| \widehat{\bf p } \|_{\mathbb{R}^3})  > 0 $ and $ c_2  = (\epsilon - \| {\bf p}  \|_{\mathbb{R}^3} ) > 0 $. Alternatively,  in view of \eqref{eq:4.5} - \eqref{eq:4.7}, we have
 
\[
| \mathcal{A}( (\widehat {\bf u},\widehat{ \theta}, \widehat{\varphi}, \widehat{p}),  (\widehat {\bf u},\widehat{ \theta}, \widehat{\varphi}, \widehat{p}) 
; s)|
\;  \geq  \alpha_0 \; \frac{ \sigma\; \underline{\sigma}^6}  {|s|^3 } \| (\widehat{\bf u}, \widehat{\theta} , \widehat{\varphi}, \widehat{ p} ) \|^2_{{\mathbb{H}}}
\] 

where $\alpha_0 > 0 $ is a constant independent of $\sigma$, and  $|s|$.  Hence, by the Lax-Milgram  lemma, there exists a unique solution of the variational problem \eqref{eq:4.16}. 

Having shown that the problem is uniquely solvable, the stability estimate \eqref{eq:4.18} can be derived from \eqref{eq:4.23} and \eqref{eq:4.16} as we show next 

\begin{align*}
 \frac{ \sigma\; \underline{\sigma}^2}  {|s|^2 } \left(\!\triple{\widehat{\bf u} }^2_{ |s|, \Omega} \! + \! c_1 \triple{\widehat{\theta}}^2_{|s|, \Omega} \! +  c_2 \right.\, & \left. \| \widehat{\varphi} \|^2_{\Omega} \! + \! \triple {\widehat{p}} ^2_{|s|, \Omega^c} \! \right) \!
  \leq \! \mathrm{Re} \left( \! Z(s) \mathcal{A}( (\widehat {\bf u},\widehat{ \theta}, \widehat{\varphi}, \widehat{p}),  (\widehat {\bf u},\widehat{ \theta}, \widehat{\varphi}, \widehat{p}) ; s) \! \right)  \\
  \leq\,& \left| \bar{s}(\widehat{d}_1, \widehat{\bf u})_{\Omega} + (\widehat{d}_2, \widehat{\theta} )_{\Omega} + s (\widehat{d}_3,\widehat{ \varphi})_{\Omega} + \bar{s} /|s|^2 \langle \widehat{d}_4, \gamma^+ \widehat{p} \rangle_{\Gamma} \right|\\
\leq\,&  \frac{|s|}{\underline{\sigma}^2} \left(|(\widehat{d}_1, \widehat{\bf u} )_{\Omega}| + |(\widehat{d} _2, \widehat{\theta} )_{\Omega}| +  |(\widehat{d}_3, \varphi)_{\Omega}| + |\langle \widehat{d}_4, \gamma^+ \widehat{ p} \rangle_{\Gamma} | \right).
\end{align*}

Consequently, using the first inequality of the equivalences \eqref{eq:4.5} through \eqref{eq:4.7}, we have the estimate 

\begin{equation}
\label{eq:4.24}
 \left(\triple{\widehat{\bf u} }^2_{ |s|, \Omega} +   \triple{\widehat{\theta} }^2_{|s|, \Omega}  +  \| \widehat{\varphi }\|^2_{\Omega}   +  \triple {\widehat{p} }^2_{|s|, \Omega^c} \right)^{1/2} \leq c_0 \; \frac{ |s|^3} {\sigma {\underline{\sigma}}^5} \; \| (\widehat{d}_1, \widehat{d}_2, \widehat{d}_3, \widehat{d}_4, 0) \|_{X^{\prime}}. 
\end{equation} 

Here $c_0$  is a constant depending only on the physical parameters $\rho_f, T_0, c_{\varepsilon}, \epsilon, {\bf p} $. The desired estimate \eqref{eq:4.18} can then be easily derived by simplifying the right hand side of the expression above and applying \eqref{eq:4.5} through \eqref{eq:4.7} to the term on left hand side. 
\end{proof} 

The estimate \eqref{eq:4.24} will lead us to verify the invertibility of the operator matrix $\mathbb A(s)$  defined in \eqref{eq:3.17b}, as we now show.  

\begin{Theorem} \label{th:4.2}
 The operator $ \mathbb A(s): X \rightarrow  X^\prime_0 $ as defined in \eqref{eq:3.17b} is invertible. Moreover, we have the estimate:

\begin{equation} \label{eq:4.25}
\| \mathbb A^{-1}(s)|_{X_0^{\prime}} \|_{X^\prime,   X}  \leq c_0 \frac{|s|^{3+1/2}}{\sigma\underline\sigma^{6+1/2}}.
\end{equation}
  
\end{Theorem}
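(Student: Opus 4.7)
The plan is to reduce \eqref{eq:3.17} for arbitrary data $(\widehat{d}_1,\ldots,\widehat{d}_5)\in X'_0$ to the special case $\widehat{d}_5=0$ handled by Theorem~\ref{th:4.1}, via an interior Dirichlet lift for the Helmholtz equation. Since $\mathrm{Re}\,s>0$, the interior problem
\[
-\Delta \widehat{p}_{\mathrm{int}} + (s/c)^2 \widehat{p}_{\mathrm{int}} = 0 \text{ in }\Omega,\qquad \gamma^-\widehat{p}_{\mathrm{int}} = -\widehat{d}_5 \text{ on }\Gamma,
\]
is uniquely solvable by Lax--Milgram, with an energy bound $\triple{\widehat p_{\mathrm{int}}}_{|s|,\Omega}\le c_0(|s|/\underline\sigma)\|\widehat d_5\|_{H^{1/2}(\Gamma)}$ obtained from standard energy estimates together with the inverse trace theorem. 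In parallel, Theorem~\ref{th:4.1} applied to $(\widehat d_1,\widehat d_2,\widehat d_3,\widehat d_4)$ furnishes $(\widehat{\mathbf u},\widehat\theta,\widehat\varphi,\widehat p^{\mathrm{ext}})\in\mathbb H$, and I would set
\[
\widehat\phi := \gamma^+\widehat p^{\mathrm{ext}} - \gamma^-\widehat p_{\mathrm{int}},\qquad \widehat\lambda := \partial_n^+\widehat p^{\mathrm{ext}} - \partial_n^-\widehat p_{\mathrm{int}}.
\]
By construction these are the jumps across $\Gamma$ of the piecewise function $\widehat p$ equal to $\widehat p^{\mathrm{ext}}$ in $\Omega^c$ and $\widehat p_{\mathrm{int}}$ in $\Omega$; Green's representation gives $\widehat p = D(s)\widehat\phi - S(s)\widehat\lambda$ on $\mathbb R^3\setminus\Gamma$, and the Calder\'on identities then immediately yield rows~4 and~5 of \eqref{eq:3.17}---row~5 reducing to $-\gamma^-\widehat p_{\mathrm{int}}=\widehat d_5$, and row~4 to the transmission condition already encoded in the variational formulation \eqref{eq:4.16}. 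Rows~1--3 are inherited from Theorem~\ref{th:4.1}. Uniqueness of the full system follows from the uniqueness portions of Theorem~\ref{th:4.1} and of the interior Dirichlet problem.

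For the quantitative bound \eqref{eq:4.25}, the core factor $|s|^3/(\sigma\underline\sigma^6)$ comes from \eqref{eq:4.18}. Two additional trace conversions must then be tracked: first, $\|\widehat d_4\|_{\mathbb H'}$ is estimated against $\|\widehat d_4\|_{H^{-1/2}(\Gamma)}$ via the $s$-weighted trace inequality (recall \eqref{eq:4.5}--\eqref{eq:4.7}); second, the unweighted norms $\|\widehat\phi\|_{H^{1/2}(\Gamma)}$ and $\|\widehat\lambda\|_{H^{-1/2}(\Gamma)}$ are extracted from the $s$-weighted energy norms of $\widehat p^{\mathrm{ext}}$ and $\widehat p_{\mathrm{int}}$ using the standard and normal trace theorems for Helmholtz solutions. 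Each such conversion between $s$-weighted and unweighted boundary norms costs a factor $(|s|/\underline\sigma)^{1/2}$; combined with the lift estimate these half-powers produce exactly the extra $|s|^{1/2}/\underline\sigma^{1/2}$ that upgrades \eqref{eq:4.18} into \eqref{eq:4.25}.

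The conceptual content is modest---an interior Dirichlet lift plus a direct application of Theorem~\ref{th:4.1}---so the main obstacle lies in the bookkeeping: the final exponents $7/2$ and $13/2$ are sensitive to every trace, inverse-trace, and dual-norm conversion carried out in the $s$-weighted Sobolev scale, and each half-power must be tracked precisely to land on \eqref{eq:4.25} rather than a weaker or falsely stronger estimate.
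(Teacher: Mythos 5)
Your core quantitative argument coincides with the paper's: both proofs take the energy bound of Theorem \ref{th:4.1} (in the form \eqref{eq:4.24}) and convert the $|s|$-weighted volume norms of $\widehat p$ into the unweighted boundary norms of $\widehat\phi$ and $\widehat\lambda$, using the trace theorem for the Dirichlet datum and a duality argument with the Bamberger--Ha~Duong lifting for the Neumann datum; these conversions supply exactly the extra half powers that upgrade \eqref{eq:4.18} to \eqref{eq:4.25}. Where you genuinely diverge is in the treatment of $\widehat d_5$: the paper works only with $\widehat d_5=0$ (consistent with the nonlocal problem, and with the restriction indicated in \eqref{eq:4.25}), deduces $\widehat p\equiv 0$ in $\overline\Omega$, and identifies $(\widehat\phi,\widehat\lambda)$ with the exterior Cauchy data via \eqref{eq:4.14}; you instead introduce an interior Dirichlet lift so as to invert $\mathbb A(s)$ against arbitrary $\widehat d_5$. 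That is a more complete reading of the invertibility claim, and the lift itself is sound.

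However, as written your reduction has a bookkeeping gap. If $\gamma^-\widehat p_{\mathrm{int}}=-\widehat d_5\neq 0$, then $\widehat\phi=\gamma^+\widehat p^{\mathrm{ext}}+\widehat d_5$, and the first row of \eqref{eq:3.17b} couples $\widehat\phi$ itself (not $\gamma^+\widehat p$) to the elastic equation through the term ${(\gamma_n^{-})}^{\prime}\widehat\phi$, whereas the variational form \eqref{eq:4.16} contains $\langle \gamma^+\widehat p\,\mathbf n,\gamma^-\overline{\widehat{\mathbf v}}\rangle_\Gamma$. Applying Theorem \ref{th:4.1} with the unmodified data therefore produces a solution of row~1 with right-hand side $\widehat d_1+{(\gamma_n^{-})}^{\prime}\widehat d_5$ rather than $\widehat d_1$; you must feed Theorem \ref{th:4.1} the corrected datum $\widehat d_1-{(\gamma_n^{-})}^{\prime}\widehat d_5$ and carry the resulting $\|\widehat d_5\|_{H^{1/2}(\Gamma)}$ contribution through the final estimate. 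Separately, your statement that ``each'' boundary conversion costs $(|s|/\underline{\sigma})^{1/2}$ does not match the paper's accounting: the Dirichlet trace costs $\underline{\sigma}^{-1}$ (see \eqref{eq:4.26}) while the normal trace costs $(|s|/\underline{\sigma})^{1/2}$ (see \eqref{eq:4.28}); both are dominated by $|s|^{1/2}\underline{\sigma}^{-3/2}$, which is why the exponents $3+1/2$ and $6+1/2$ in \eqref{eq:4.25} still come out correctly, but the individual attributions should be stated precisely.
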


\begin{proof}
From \eqref{eq:4.14}, we see that 

\[
\jump{\gamma\widehat{p}} = \gamma ^+ \widehat{p} = \widehat{\phi}  \quad \mbox{and} \quad \jump{\partial_n \widehat{p}} =  \partial_n^+\widehat{p} = \widehat{ \lambda}.  
\] 

From which it can be shown that (see, e.g. \cite{HSW:2013}): 

\begin{equation}
\label{eq:4.26}
\|\widehat {\phi } \|^2_{H^{1/2}(\Gamma)} = \|\gamma^+ \widehat{ p} \|_{H^{1/2}(\Gamma)}^2 \leq c_1 \,  \triple{\widehat{p} }^2_{1, \Omega^c} \leq c_1 \frac{1}{\underline{\sigma}^2}  \triple{\widehat{p}}^2_{|s|, \Omega^c}
\end{equation}

Similarly, we have

\[
|\! \langle \widehat{\lambda}, \widehat{q}^+\rangle \! | \! = \! | \!\langle \partial_n^+\widehat{ p} , \widehat{q}^+\rangle \! | \! = \! |a_{s, \Omega^c}(\widehat{p}, \widehat{q} )| \! 
\leq \! \triple{\widehat{p} }_{|s|, \Omega^c} \triple{\widehat{q}}_{|s|, \Omega^c} \!
\leq \! c_2 \sqrt{ |s|/\underline{\sigma} }\,\triple{\widehat{p}}_{|s|, \Omega^c}  \| \widehat{q}^+ \! \|_{H^{1/2}(\Gamma)} 
\]

which implies

\begin{equation}
\label{eq:4.28}
\| \widehat{\lambda} \|_{H^{-1/2}(\Gamma)} = \sup_{0 \ne q^+ \in H^{-1/2} (\Gamma)}  \frac{  | \langle \partial_n^+ \widehat{p}, \widehat{ q}^+ \rangle _{\Gamma} | }
{ \; \| \widehat{q}^+ \|_{H^{1/2} (\Gamma) } }  \leq \, c_2 \sqrt{ |s|/\underline{\sigma} }\,\triple{\widehat{p}}_{|s|, \Omega^c}. 
\end{equation} 

Above, Bamberger and Ha-Duong's optimal lifting \cite{BaHa:1986a, BaHa:1986b} has been used to bound the norm  $ \triple{\widehat{q}}_{|s|, \Omega^c} $ by  $ \| \widehat{q}^+\|_{H^{1/2}(\Gamma)} $  in \eqref{eq:4.25}.  Then \eqref{eq:4.26} and \eqref{eq:4.28} yield  
 the estimates
\begin{equation}
\frac{1}{2} \Big(\frac{1}{c_1} \underline{\sigma}^2  \| \widehat{\phi }\|^2_{H^{1/2}(\Gamma)} + 
 \frac {\underline\sigma}{c_2^2|s|}  \| \widehat{\lambda} \|^2_{H^{-1/2}(\Gamma)}\Big)
 \leq \triple{ \widehat{p} }^2_{|s|, \Omega_+}.\label{eq:4.29}
 \end{equation}

As a consequence of \eqref{eq:4.24}, it follows  from \eqref{eq:4.29} that 

\begin{align*}
 \Big(\underline{\sigma}^2 \triple{\widehat{\bf u} }^2_{ 1, \Omega} +  \underline{\sigma} \triple{\widehat{ \theta} }^2_{1, \Omega} 
 +  \| \widehat{\varphi}  \|^2_{\Omega} & +
 \frac{1}{2} \Big(\frac{\underline{\sigma}^2 }{c_1} \|\ \widehat{\phi}  \|^2_{H^{1/2}(\Gamma)} + 
 \frac {\underline\sigma}{c_2^2|s|}  \| \widehat{ \lambda} \|^2_{H^{-1/2}(\Gamma)}\Big)\Big)^{1/2}\\
&\leq c_0 \; \frac{ |s|^3} {\sigma {\underline{\sigma}}^5} \; \| (\widehat{d}_1, \widehat{d}_2, \widehat{d}_3, \widehat{d}_4, \widehat{d}_5) \|_{X_0^{\prime}}
\end{align*} 

which  implies

\begin{align*}
\Big(\!\triple{\widehat{ \mathbf u}}^2_{1, \Omega_-} \! + \! \triple{\widehat{\theta}}^2_{1, \Omega_-} \! + \! \| \widehat{\varphi} \|^2_{H^1_*(\Omega)}
\! + \! \| \widehat{\phi } \|^2_{H^{1/2}(\Gamma)} \! + \,& \| \widehat{ \lambda} \|^2_{H^{-1/2}(\Gamma)}\Big)^{1/2}\\
 & \leq  c_0 \frac{|s|^{3+1/2} }{\sigma\underline{\sigma}^{6+1/2}} \|(\widehat{d}_1, \widehat{d}_2, \widehat{d}_3, \widehat{d}_4, \widehat{d}_5)\|_{X_0^\prime}. 
\end{align*}
\end{proof}

% =========================================================
\section{Results in the time domain}\label{sec:TimeDomain} 
% =========================================================
%
Having established the properties of the operators and solutions to our problem in the Laplace domain, we can now return to the time domain and establish analogue results. In order to state the result that will allow us to transfer our previous analysis in the Laplace domain back in to the time domain, following \cite{LaSa:2009b}, let us first define a class of admissible symbols.

The following definition, and the proposition following immediately after it (an improved version of \cite[Proposition 3.2.2]{Sayas:2016}, and \cite{Sayas:2016errata}) will be used to transform the Laplace-domain bounds into time-domain statements.\\[2mm]

\noindent
{\bf A class of  admissible symbols}: Let  $\mathbb X$ and $\mathbb Y$ be Banach spaces and  $\mathcal{B}(\mathbb X, \mathbb Y)$ be the set of bounded linear operators from $\mathbb X$ to $\mathbb Y$. An operator-valued analytic function $A : \mathbb{C}_+ \rightarrow \mathcal{B}(\mathbb X, \mathbb Y)$ is said to belong to the class $ \mathcal{A} (\mu, \mathcal{B}(\mathbb X, \mathbb Y))$, if there exists a real number $\mu$ such that 

\[
\|A(s)\|_{\mathbb X,\mathbb Y} \le C_A\left(\mathrm{Re} (s)\right) |s|^{\mu} \quad \mbox{for}\quad s \in \mathbb{C}_+ ,
\]

where the function $C_A : (0, \infty) \rightarrow (0, \infty) $ is non-increasing and satisfies 

\[
C_A(\sigma) \le \frac{ c}{\sigma^m} , \quad \forall \quad \sigma \in ( 0, 1]
\]

for some  $m \ge 0$ and $c$ independent of $\sigma$.  

\begin{Proposition} {\em (\cite{Sayas:2016errata})} \label{pr:5.1}
Let $A = \mathcal{L}\{a\} \in \mathcal{A} (k + \alpha, \mathcal{B}(\mathbb X,\mathbb Y))$ with $\alpha\in [0, 1)$ and $k$ a non-negative integer.  If $ g \in \mathcal{C}^{k+1}(\mathbb{R}, \mathbb X)$ is causal and its derivative $g^{(k+2)}$ is integrable, then $a* g \in \mathcal{C}(\mathbb{R}, \mathbb Y)$ is causal and 

\[
\| (a*g)(t) \|_{\mathbb Y} \le 2^{\alpha} C_{\epsilon} (t) C_A (t^{-1}) \int_0^1 \|(\mathcal{P}_2g^{(k)})(\tau) \|_{\mathbb X} \; d\tau,
\]

where 

\[
C_{\epsilon} (t) := \frac{1}{2\sqrt{\pi}} \frac{\Gamma(\epsilon/2)}{\Gamma\left( (\epsilon+1)/2 \right) } \frac{t^{\epsilon}}{(1+ t)^{\epsilon}}, \qquad (\epsilon :=  1- \alpha \; \;  \mbox{and}\; \;  \mu = k +\alpha)
\]

and 

\[
(\mathcal{P}_2g) (t) =  g + 2\dot{g} + \ddot{g}.
\]
\end{Proposition}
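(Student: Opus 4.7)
The plan is to deduce the time-domain estimate from Bromwich-contour inversion combined with two ideas: converting $s$-polynomial weights on the symbol $A$ into derivative operators acting on $g$ via the Laplace-transform rule for causal functions, and then optimizing the contour abscissa $\sigma = 1/t$ to balance the exponential $e^{\sigma t}$ against the worst-case behavior of $A$ near the imaginary axis.

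First I would write, for $\sigma > 0$,
\[
(a*g)(t) = \frac{1}{2\pi i}\int_{\sigma - i\infty}^{\sigma + i\infty} e^{st}\, A(s)\, G(s)\, ds, \qquad G := \mathcal{L}\{g\}.
\]
Causality of $g$ together with the assumption $g \in \mathcal{C}^{k+1}$ gives $g^{(j)}(0) = 0$ for $j = 0,\dots,k+1$, and integrability of $g^{(k+2)}$ ensures that the differentiation rule $\mathcal{L}\{g^{(j)}\}(s) = s^j G(s)$ remains valid up to order $k+2$. Consequently,
\[
\mathcal{L}\{\mathcal{P}_2 g^{(k)}\}(s) = s^k(1+s)^2\, G(s),
\]
which is the algebraic key that lets us absorb the polynomial growth of $A$. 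Rewriting $A(s) G(s) = \frac{A(s)}{s^k(1+s)^2}\,\mathcal{L}\{\mathcal{P}_2 g^{(k)}\}(s)$ and invoking the admissibility bound of class $\mathcal{A}(k+\alpha,\mathcal{B}(\mathbb{X},\mathbb{Y}))$ yields
\[
\|A(s) G(s)\|_{\mathbb{Y}} \leq C_A(\mathrm{Re}\,s)\,\frac{|s|^{\alpha}}{|1+s|^2}\,\|\mathcal{L}\{\mathcal{P}_2 g^{(k)}\}(s)\|_{\mathbb{X}},
\]
while the last factor is majorized, uniformly for $\mathrm{Re}\,s > 0$, by $\int_0^\infty \|(\mathcal{P}_2 g^{(k)})(\tau)\|_{\mathbb{X}}\,d\tau$.

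Choosing $\sigma = 1/t$ makes $e^{\sigma t}$ a universal constant and produces the factor $C_A(t^{-1})$. What remains is to estimate the scalar integral
\[
J(\sigma) := \int_{-\infty}^{\infty}\frac{(\sigma^2 + y^2)^{\alpha/2}}{(1+\sigma)^2 + y^2}\,dy,
\]
and this is the step I expect to be the main technical obstacle. Convergence at infinity requires $\alpha < 1$, and the exponent $\alpha$ must be tracked carefully through the computation. A substitution of the form $y = (1+\sigma)u$ rescales $J(\sigma)$ into a Beta-type integral; the bound $(\sigma^2 + y^2)^{\alpha/2} \leq 2^{\alpha/2}(\sigma^\alpha + |y|^\alpha)$, or equivalently identifying the resulting expression with $B(\epsilon/2,(\epsilon+1)/2) = \Gamma(\epsilon/2)\Gamma((\epsilon+1)/2)/\Gamma((2\epsilon+1)/2)$ with $\epsilon = 1-\alpha$, will produce the Gamma-quotient in $C_{\epsilon}(t)$. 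The factor $(1+\sigma)^{-\epsilon}$ coming from the rescaling reduces, upon setting $\sigma = 1/t$, to $[t/(1+t)]^{\epsilon}$, yielding precisely the $C_\epsilon(t)$ dependence and the prefactor $2^{\alpha}$.

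Causality of $a*g$ is then immediate by closing the Bromwich contour to the right for $t < 0$ and using analyticity of $A(s) G(s)$ on $\mathbb{C}_+$, while continuity of $a*g$ as a $\mathbb{Y}$-valued function of $t$ follows from dominated convergence applied to the Bromwich representation, since for $t$ in any compact subinterval of $(0,\infty)$ the integrand admits a $t$-independent integrable majorant. No step requires properties of $a$ or $g$ beyond those stated, and the argument produces the estimate in the quantitative form claimed.
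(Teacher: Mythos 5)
You should first note that the paper does not actually prove Proposition \ref{pr:5.1}: it is quoted verbatim from the errata to Sayas's monograph, so the only meaningful comparison is with the proof in that source. Your strategy --- Bromwich inversion of $A(s)G(s)$, rewriting $A(s)G(s)=\frac{A(s)}{s^k(1+s)^2}\,\mathcal{L}\{\mathcal{P}_2 g^{(k)}\}(s)$ so that the admissibility bound leaves the integrable weight $|s|^{\alpha}|1+s|^{-2}$, evaluating the resulting scalar contour integral as a Beta function, and then taking the abscissa $\sigma=1/t$ to turn $e^{\sigma t}$ into a constant and produce $C_A(t^{-1})$ and $\bigl(t/(1+t)\bigr)^{\epsilon}$ --- is exactly the Laliena--Sayas argument, and the pieces you do carry out (the vanishing of $g^{(j)}(0)$ for $j\le k+1$ from causality plus $\mathcal C^{k+1}$ regularity, the identity $\mathcal{L}\{\mathcal{P}_2g^{(k)}\}=s^k(1+s)^2G$, the causality and continuity of $a*g$) are correct.

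There is, however, one genuine gap. Your bound on $\|\mathcal{L}\{\mathcal{P}_2 g^{(k)}\}(s)\|_{\mathbb X}$ is the global quantity $\int_0^\infty\|(\mathcal{P}_2g^{(k)})(\tau)\|_{\mathbb X}\,d\tau$, whereas the proposition asserts a bound by the truncated integral (the ``$\int_0^1$'' in the statement is evidently a typo for $\int_0^t$, as the applications in Theorems 5.2 and 5.3 confirm). The truncated form is not a cosmetic improvement: it is what makes the time-domain estimates local in time and is what the paper actually uses. To obtain it you need the additional localization step of the original proof: since $a$ is causal, $(a*g)(t)$ depends only on $g|_{(-\infty,t]}$, and $g|_{(-\infty,t]}$ is in turn recovered from $h:=\mathcal{P}_2g^{(k)}$ restricted to $(-\infty,t]$ by solving $y+2\dot y+\ddot y=h$ with vanishing data and integrating $k$ times; replacing $h$ by $h\chi_{(-\infty,t]}$ therefore leaves $(a*g)(t)$ unchanged while reducing $\int_0^\infty e^{-\sigma\tau}\|\tilde h(\tau)\|\,d\tau$ to at most $\int_0^t\|h(\tau)\|\,d\tau$ (one must also check that the modified $g$ retains enough growth control for the Bromwich representation, which it does because the homogeneous solution decays like $\tau e^{-\tau}$). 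A secondary, lesser issue is that the evaluation of $J(\sigma)$ is only sketched; since that computation is the sole source of the constants $2^{\alpha}$ and $\Gamma(\epsilon/2)/\Gamma((\epsilon+1)/2)$ in $C_{\epsilon}(t)$, it cannot be omitted from a complete proof, although the reduction you outline (rescale by $1+\sigma$, identify a Beta integral, set $\sigma=1/t$) is the right one.
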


The results proven in Section \ref{sec:VariationalSolutions}---specifically the bounds obtained in terms of the Laplace parameter $s$ and its real part $\sigma$---will now allow us to show that the operators involved belong precisely to one such class of symbols. 
  
 We begin with the results in Theorem \ref{th:4.1} and  from \eqref{eq:4.18}, we may write 
 
 \begin{gather}
 \nonumber
 (\widehat{\bf u}, \widehat{\theta}, \widehat{\varphi}, \widehat{p} )^{\top} = A(s) (\widehat{d}_1, \widehat{d}_2, \widehat{d}_3, \widehat{d}_4, 0)^{\top},\\
 \label{eq:5.2}
  \| A(s)|_{X_0^{\prime}} \|_{X^{\prime}, {\mathbb{H}}}  \leq C_{A}\,  \frac{|s|^3}{\sigma \underline{\sigma}^6}.
 \end{gather}
 
 Hence,  $A(s) \in \mathcal{A} ( 3, \mathcal{B}( X_0^{\prime}, {\mathbb{H}}))$, and 
 
 \begin{align*}
 ({\bf u}, \theta, \varphi, p )^{\top}  &= \mathcal{L}^{-1} \Big\{  A(s) \,  (\widehat{d}_1, \widehat{d}_2, \widehat{d}_3, \widehat{d}_4, 0)^{\top} \Big\}
\\
&= \mathcal{L}^{-1}\{A(s)\} * \mathcal{L}^{-1}\{(\widehat{d}_1, \widehat{d}_2, \widehat{d}_3, \widehat{d}_4, 0)^{\top}\}\\
& = ( \mathcal{L}^{-1}\{A\} * {\bf D}) (t)\\
& =:( a* g)(t) \quad \mbox{ according to  Proposition \ref{pr:5.1} }. 
 \end{align*}
 
From the estimate of $A(s) $ in  \eqref{eq:5.2} , we have  

\[
\mu = k +\alpha = 3 \quad \mbox{implies }\quad   k= 3, \;  \alpha = 0 \quad \mbox{and} \quad \varepsilon = 1 - \alpha = 1.
\]

Thus, we have established  the following theorem. 

\begin{Theorem} \label{thm:5.2} 
Let \,  ${\mathbb{H}} :=  \mathbf {H}^1(\Omega)  \times H^1(\Omega) \times H_{*}^ 1(\Omega) \times H^1(\Omega^c)$. If

\[
{\bf D}(t):= \mathcal{L}^{-1}\left\{ ( \widehat{d}_1, \widehat{d}_2, \widehat{d}_3, \widehat{d}_4, 0 )^{\top} \right\}  \in
 \mathcal{C}^4( \mathbb{R},  X^{\prime}_0)
\]
 
 is causal and its derivative ${\bf D}^{(5)} $ is integrable,  then $( {\bf u}, \theta, \varphi, p)^{\top} \in \mathcal{C} (\mathbb{R} , {\mathbb{H}} )^{\top} $ is causal and
   
\[
 \|( \mathbf{u}, \theta, \varphi, p )^{\top }(t) \|_{{\mathbb{H}}}\leq c_0\;  \frac{t^2} {1 + t} \max \{ 1, t^6\} \int_0^t  \|(\mathcal{P}_2 {\bf D}^{(3)} ) (\tau) \| d\tau
\]

for some constant $c_0>0$, where $ (\mathcal{P}_2 {\bf D})(t)  = {\bf D} + 2\,  {\bf \dot{D}} + {\bf \ddot{D}} $.
\end{Theorem}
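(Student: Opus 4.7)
The plan is to invoke Proposition~\ref{pr:5.1} directly, applied to the solution operator $A(s)$ identified in the discussion preceding the statement and to the data $g=\mathbf{D}$. The theorem is essentially the time-domain packaging of the Laplace-domain estimate \eqref{eq:5.2}; all the hard analytic work has been carried out in Theorem~\ref{th:4.1}, so what remains is verifying the symbol class and matching the parameters of the transfer proposition.

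First I would check that $A\in\mathcal{A}(3,\mathcal{B}(X_0',\mathbb{H}))$. The bound \eqref{eq:5.2} provides the polynomial exponent $\mu=3$ and the coefficient function $C_A(\sigma)=C_A/(\sigma\,\underline{\sigma}^6)$, which is non-increasing on $(0,\infty)$ and satisfies $C_A(\sigma)\le c/\sigma^{7}$ on $(0,1]$ (so $m=7$ in the definition of admissible symbols). Next I would read off the parameters: the decomposition $\mu=k+\alpha$ with $k\in\mathbb{Z}_{\ge 0}$ and $\alpha\in[0,1)$ forces $k=3$ and $\alpha=0$, hence $\epsilon=1-\alpha=1$. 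The regularity hypothesis of Proposition~\ref{pr:5.1} thus reduces to $\mathbf{D}\in\mathcal{C}^{k+1}(\mathbb{R},X_0')=\mathcal{C}^4(\mathbb{R},X_0')$ causal with $\mathbf{D}^{(k+2)}=\mathbf{D}^{(5)}$ integrable, which is precisely the assumption of the theorem, and the integrand on the right becomes $(\mathcal{P}_2\mathbf{D}^{(3)})(\tau)$, as in the conclusion.

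The last task is to evaluate the prefactor. For $\epsilon=1$ one has
\[
C_1(t)=\frac{1}{2\sqrt{\pi}}\,\frac{\Gamma(1/2)}{\Gamma(1)}\,\frac{t}{1+t}=\frac{1}{2}\,\frac{t}{1+t},
\]
and, since $\underline{t^{-1}}=\min\{1,t^{-1}\}$, a case split on $t\lessgtr 1$ gives $C_A(t^{-1})\le C_A\,t\,\max\{1,t^6\}$. Multiplying and noting $2^{\alpha}=1$ yields
\[
2^{\alpha}\,C_1(t)\,C_A(t^{-1})\le \frac{C_A}{2}\,\frac{t^2}{1+t}\,\max\{1,t^6\},
\]
which matches (up to renaming the constant) the factor appearing in the statement. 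Causality and continuity of $(\mathbf{u},\theta,\varphi,p)^\top=a*\mathbf{D}$ are then immediate consequences of the proposition.

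The main obstacle is clerical rather than conceptual: confirming that $\sigma\mapsto C_A(\sigma)$ is monotone on the entire positive axis, pinning down the splitting $\mu=3=3+0$ unambiguously (so that no fractional part $\alpha>0$ sneaks in and produces a logarithmic-type loss through $C_\epsilon$), and absorbing the physical constants inherited from Theorem~\ref{th:4.1} into the single $c_0$ of the final estimate. No additional PDE analysis or boundary-integral machinery is needed; the theorem is a direct corollary of the admissible-symbol framework.
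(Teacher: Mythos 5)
Your proposal is correct and follows essentially the same route as the paper: identify $A(s)\in\mathcal{A}(3,\mathcal{B}(X_0',\mathbb{H}))$ from \eqref{eq:5.2}, apply Proposition \ref{pr:5.1} with $k=3$, $\alpha=0$, $\epsilon=1$, and evaluate $C_1(t)\,C_A(t^{-1})$ to obtain the stated prefactor. Your explicit case split giving $C_A(t^{-1})\le C_A\,t\max\{1,t^6\}$ is a detail the paper leaves implicit, but the argument is the same.
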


Similarly, from Theorem \ref{th:4.2}, we have 
 
\[
\left(   \widehat{ \mathbf{u} },  \widehat{ \theta} , \widehat{\varphi} ,   \widehat{\phi} ,  \widehat{\lambda} \right)^{\top} = \mathbb A^{-1} (s)   
\left( \widehat{d}_1,  \widehat{d}_2, \widehat{d}_3, \widehat{d}_4
,  \widehat{d}_5 \right)^{\top},
\]

from which, using \eqref{eq:4.25}, we infer that

\[
\| \mathbb A^{-1}(s) |_{X_0^{\prime}} \|_{X^\prime, \;   {\mathbb{H}}}  \leq c_0 \frac{|s|^{3+1/2}}{\sigma\underline\sigma^{6+1/2}} ,
\] 

hence $  \mathbb A^{-1}(s)  \in \mathcal{A} ( 3 \frac{1}{2},  \mathcal{B}( X_0^{\prime}, {\mathbb{H}})) $. Applying Proposition \ref{pr:5.1} with

\[
\mu =  (k + \alpha ) = 3 \frac{1}{2} , \;    k= 3, \;  \alpha = 1/2, \;  \varepsilon = 1 - \alpha = 1/2,
\]

then yields the following theorem.

\begin{Theorem}
Let  $ X :=  \mathbf {H}^1(\Omega)  \times H^1(\Omega) \times H_{*}^ 1(\Omega) \times H^{1/2}(\Gamma)  \times H^{-1/2}(\Gamma) $. If 
 
\[
\mathbf{D}(t) := \mathcal{L}^{-1} \{ ( \widehat{d} _1, \widehat{d}_2, \widehat{d}_3, \widehat{d}_4, \widehat{d}_5 )^{\top} \}(t) \in \mathcal{C}^{(4)} (\mathbb{R} , X_0^{\prime}) 
\]

is causal, and its derivative  ${\bf D}^{(5)}$  is integrable,   then $({\bf u}, \theta, \varphi, \phi, \lambda) ^{\top} \in C( \mathbb{R}, X) $  is casual,  and  there holds the estimate

\begin{gather*}
\|( ({\bf u}, \theta, \varphi,  \phi, \lambda) ^{\top} (t)\|_X  \leq ~c_{1/2}~ \frac{t^{ 1 + 1/2 } }{(1+ t)^{1/2}} ~\max \{1, t^{6 + 1/2} \} \!\!\int_0^t
 \| ( \mathcal{P}_2\mathbf D^{(3)}) (\tau)\|_{X^{\prime} }\;  d \tau ,  \\[2mm]
  (\mathcal{P}_2 {\bf D})(t) : = {\bf D} + 2\,  {\bf \dot{D}} + {\bf \ddot{D}}.
 \end{gather*}

 for some constant $c_{1/2} > 0$. 
\end{Theorem}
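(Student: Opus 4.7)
The plan is to mimic the argument already carried out for Theorem \ref{thm:5.2}, substituting the bound from Theorem \ref{th:4.2} for the one from Theorem \ref{th:4.1} and tracking the correct exponents through Proposition \ref{pr:5.1}. First I would take Laplace transforms of both sides of the identity
\[
(\widehat{\mathbf u},\widehat{\theta},\widehat{\varphi},\widehat{\phi},\widehat{\lambda})^{\top}
= \mathbb A^{-1}(s)\,(\widehat{d}_1,\widehat{d}_2,\widehat{d}_3,\widehat{d}_4,\widehat{d}_5)^{\top}
\]
and, under the assumed causality and regularity of $\mathbf D$, write the time-domain solution as the convolution $a*\mathbf D$, where $a=\mathcal L^{-1}\{\mathbb A^{-1}\}$.

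Next I would verify that $\mathbb A^{-1}(s)$ belongs to the admissible symbol class $\mathcal A(7/2,\mathcal B(X_0',X))$. From the estimate \eqref{eq:4.25} of Theorem \ref{th:4.2} we have
\[
\|\mathbb A^{-1}(s)\|_{X_0',X}\leq C_A(\sigma)\,|s|^{7/2},\qquad
C_A(\sigma):=\frac{c_0}{\sigma\,\underline\sigma^{13/2}},
\]
and $C_A$ is non-increasing in $\sigma$; moreover for $\sigma\in(0,1]$ we have $\underline\sigma=\sigma$, so $C_A(\sigma)\leq c_0/\sigma^{15/2}$, which verifies the hypothesis on $C_A$ in the definition of the admissible class with $m=15/2$. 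The exponent $\mu=7/2$ then decomposes as $\mu=k+\alpha$ with $k=3$ and $\alpha=1/2$, so that $\epsilon=1-\alpha=1/2$, which matches the powers of $t$ that appear in the statement.

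With this data in hand I would apply Proposition \ref{pr:5.1}. Since $\mathbf D\in\mathcal C^{4}(\mathbb R,X_0')$ is causal and $\mathbf D^{(5)}$ is integrable, the proposition guarantees that $a*\mathbf D\in\mathcal C(\mathbb R,X)$ is causal and
\[
\|(a*\mathbf D)(t)\|_X\leq 2^{1/2}\,C_{\epsilon}(t)\,C_A(t^{-1})\int_{0}^{t}\|(\mathcal P_2\mathbf D^{(3)})(\tau)\|_{X'}\,d\tau.
\]
It only remains to combine $C_\epsilon(t)$ with $C_A(t^{-1})$ into the form stated in the theorem. Since $C_\epsilon(t)$ is a constant multiple of $t^{1/2}(1+t)^{-1/2}$, and since $C_A(t^{-1})\leq c_0\,\max\{t,\,t^{15/2}\}=c_0\,t\,\max\{1,\,t^{13/2}\}$, the product is bounded (up to an absolute constant) by $t^{3/2}(1+t)^{-1/2}\max\{1,t^{13/2}\}$, which is exactly the prefactor in the statement.

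I do not expect any serious obstacle: every analytical ingredient is already in place. The only points requiring a bit of care are the clean splitting of $\mu=7/2$ so that $\alpha\in[0,1)$, and the elementary case-split (on $t\geq 1$ versus $t<1$) used to rewrite $C_A(t^{-1})$ in the $\max\{1,t^{13/2}\}$ form that appears in the theorem. Both are routine, and the constant $c_{1/2}>0$ absorbs the numerical factors coming from $C_\epsilon$ and from Proposition \ref{pr:5.1}.
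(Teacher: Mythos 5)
Your proposal is correct and follows essentially the same route as the paper: the paper likewise reads off from \eqref{eq:4.25} that $\mathbb A^{-1}(s)\in\mathcal A(3+\tfrac12,\mathcal B(X_0',X))$ and applies Proposition \ref{pr:5.1} with $k=3$, $\alpha=1/2$, $\epsilon=1/2$. Your explicit verification of the hypotheses on $C_A$ and the computation turning $C_\epsilon(t)\,C_A(t^{-1})$ into $t^{3/2}(1+t)^{-1/2}\max\{1,t^{13/2}\}$ is exactly the bookkeeping the paper leaves implicit, and all exponents match the stated bound.
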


In view of \eqref{eq:3.5} and the inverse of $\mathbb A(s)$, we see that $\widehat{\mathbf u}, \widehat{\theta}, \widehat{\varphi} $ and $\widehat{p}$ are simply solutions of the following system

\begin{equation} \label{eq:5.3}
 (\widehat{\bf u}, \widehat{\theta}, \widehat{\varphi}, \widehat{p} )^{\top} 
=   A_2(s) \circ \mathbb A^{-1} (s) \;
     (\widehat{d}_1, \widehat{d}_2, \widehat{d}_3, \widehat{d}_4, 0)^{\top},   
 \end{equation}
 
 where 

\begin{equation*}
 A_2(s)  =
 \left( \begin{matrix}  % or pmatrix or bmatrix or Bmatrix or ...
      1& 0 & 0 & 0 & 0\\[3mm]
      0 & 1& 0 & 0  & 0 \\[3mm]
      0& 0 & 1 & 0  & 0 \\[3mm]
      0 & 0 & 0  & D(s) & - S(s)\\[3mm]
    \end{matrix}
    \right). 
\end{equation*}

As a consequence of Theorem \ref{thm:5.2}, $A_2(s)\circ  \mathbb A^{-1} (s)$ belongs to the class 
 $\mathcal{A}(3 + 1/2, \mathcal{B}(X_0^{\prime}, {\mathbb{H}}))$. However, we  may also compute the index of the matrix of operators $A_2(s)$.  For $ \widehat{\phi} \in H^{1/2}(\Gamma)  $, let  $\widehat{u} =  D(s)  \phi \in \mathbb{R}^3 \setminus \Gamma $, then 
 
 \begin{align*}
 \sigma \triple{\widehat{u}}^2_{|s|, \mathbb{R}^3 \setminus \Gamma} & = \mathrm{Re} \left( \overline{s} \; \left\langle W \widehat{\phi}, \overline{\widehat{\phi}} \right\rangle_{\Gamma} \right) \\
 &\leq |s|\; \| W \widehat{\phi} \|_{H^{-1/2} (\Gamma)} \; \| \widehat{\phi } \|_{H^{1/2}(\Gamma)} \\
 &\leq c_1 \left( \frac{|s|}{\underline{\sigma}}\right)^{1/2}  \; |s|  \, \triple{\widehat{u}}_{|s|, \mathbb{R}^3 \setminus \Gamma} \;  \| \widehat{ \phi} \|_{H^{1/2} (\Gamma) }
 \end{align*}
 
 Hence, from \eqref{eq:4.7}, we obtain 

\[
\| D(s) \widehat{\phi} \|_{ H^1(\mathbb{R}^3 \setminus \Gamma) } \leq c_1  \frac{ |s|^{3/2} } {\sigma \underline{
 \sigma} ^{3/2}} \|\widehat{\phi}\|_{H^{1/2}(\Gamma)},
\]
 
which implies $D(s) \in \mathcal{A} (3/2, \mathcal{B}( H^{1/2}(\Gamma), H^1(\mathbb{R}^3 \setminus \Gamma) ) $. Similarly, for $\widehat{\lambda} \in H^{-1/2}(\Gamma) $, if we set $\widehat {u} = S(s) \widehat{\lambda} $ in $\mathbb{R}^3 \setminus \Gamma$, then we may show that

\[
\| S(s) \widehat{\lambda} \|_{ H^1(\mathbb{R}^3 \setminus \Gamma) }   \leq c_2  \frac{|s|} {\sigma \underline{\sigma}^2 }
  \| \widehat{\lambda} \|_{ H^{-1/2} (\Gamma)}.
\]
 
That is, $ S(s)  \in  \mathcal{A} (1, \mathcal{B} ( H^{-1/2}(\Gamma), H^1(\mathbb{R}^3 \setminus \Gamma))$, and hence

\[
\| A_2(s) \|_{ X, {\mathbb{H}}}  \leq  c_3 \frac{ |s|^{1 + 1/2}} { \sigma \underline{\sigma} ^{ 2+1/2}}.
\]

Following \cite{LaSa:2009b}, if we apply the composition rule and make use of the estimate of $\mathbb A^{-1}(s)$ in \eqref{eq:4.25}, we find the  matrices of the operators in \eqref{eq:5.3} ended with an index  $\mu = (1 +1/2) +  (3 + 1/2) = 5$. However, this only gives an upper bound for the actual index of $A_2(s) \circ \mathbb A^{-1} (s)$  in \eqref{eq:5.3}. 
 
% ================================================
\section{Concluding remarks}\label{sec:Conclusions}
% =================================================
%
A few remarks should be in order. This paper is dealing with a time-dependent wave- thermopiezoelectric structure interaction problem by the time-dependent boundary-field equation approach. With the help of a appropriate scaling factor $Z(s)$ in \eqref{eq:4.20}, we are able to establish the existence and uniqueness of the solutions to the problem. For simplicity, in this paper we only impose natural boundary conditions for the corresponding partial differential equations involved in the the interior domain $\Omega$. Clearly, one may also impose mixed boundary conditions as well. Moreover, the results presented in this communication generalize those presented in \cite{HsSaSa:2016} for elastic-acoustic interactions, \cite{SaSa:2016} for acoustic-piezoelectric interactions, and \cite{HsSaSaWe:2016} for acoustic-thermoelastic interactions, since all those results can be recovered from the ones in this communication by setting to zero selected entries of the piezoelectric tensor, or thermal consants. Moreover, the present work complements the recent articles \cite{HsSab:2020,HsSa:2020} where boundary integral equations of the first kind are studied for the dynamic thermo-elastic equations. 

These results can be used to simulate  
wave-structure interactions numerically by using the nowadays well-known convolution quadrature (CQ) method. Numerical experiments based on QC for the special cases of the wave-structure interactions listed above are available in \cite{BrSaSa:2016, HaQiSaSa:2015, HsSaSa:2016, HsSaSaWe:2016, SaSa:2016}. The numerical treatment for the operators in the  present paper will be reported in a separate communication.

% =====================================


\begin{thebibliography}{99}
% ====================================
%
\bibitem{Ac:1990}
A. J.  Acheson.
\newblock {\em Elementary Fluid Dynamics}, Oxford Applied Mathematics and Computing Science Series.
\newblock Clarendon Press, Oxford, 1990. 

\bibitem{AmHr:1988}
S.~Amini, P.J.~Harris.
\newblock Boundary element and finite element methods for the coupled fluid interaction problem
\newblock In:{\em BEM X} C.A.~Brebbia ed.
\newblock CMP, Springer-verlag, 509-520, 1988

\bibitem{BaHa:1986a}
A.~Bamberger and T.~Ha Duong.
\newblock Formulation Variationnelle Espace-Temps pour le Calcul par Potentiel Retard'e de la Diffraction of d'une Onde Acoustique (I)
\newblock{\em Math. Meth. Appl.Sci}, 8(3): 405--435, 1986.

\bibitem{BaHa:1986b}
A.~Bamberger and T.~Ha Duong.
\newblock Formulation Variationnelle pour le Calcul de la Diffraction of d'une Onde Acoustique 
par une Surface Rigide
\newblock{\em Math. Meth. Appl.Sci}, 8(4): 598--608, 1986.

\bibitem{BiMa:1991}
J. Bielak and R.C. MacCamy.
\newblock Symmetric finite element and boundary integral coupling methods for fluid-solid interaction I.
\newblock{\em Quart. J. Appl. Math.   }, 49: 107--119, 1991.

\bibitem{BrSaSa:2016}
T. S. Brown, T. S\'anchez-Vizuet  and F-J. Sayas.
\newblock Evolution of a semidiscrete system modeling the scattering of
 acoustic waves by a piezoelectric solid.
\newblock {\em ESAIM: Mathematical Modeling and Numerical Analysis (M2AN)},
 52(2):423--455, 2018.
 
 \bibitem{BuMi:1971}
A.J.~Burton, G.F.~Miller.
\newblock The application of integral equation methods to the numerical solution of some exterior
 boundary -value problems
\newblock {\em Proc. Roy. Soc. London}.  ser. A, 323: 201-210, 1971
 
 
\bibitem{CaHs:2002}
 F.~\c{C}akoni, G.C.~Hsiao.
\newblock Mathematical model of the Interaction problem between
electromagnetic field and elastic body, in {\em Acoustics,
Mechanics, and the Related Topics of Mathematical Analysis}, A.
Wirgin, ed., 
\newblock  World Scientific Publishing Co., New Jersey, 48--54, 2002. 


\bibitem{Co:2004}
M.~Costabel.
\newblock Time-dependent problems with the boundary integral equation method.
Chapter 25,  703--721, Volume 1 {\em Fundamentals}. In 
 {\em Encyclopedia of Computational Mechanics}.
\newblock John Wiley \& Sons, Ltd. 2004. 

\bibitem{DoNiSu:2019}
S.~Dom\'inguez, N.~Nigam, and J.~Sun.
\newblock Revisiting the Jones Eigenproblem in fluid-structure Interaction.
 {\em SIAM Journal on Applied Mathematics}, 79(6): 2385--2408, 2019.

\bibitem{EsAn:1991}
O.von Estorff and H. Antes.
\newblock On FEM-BEM coupling for fluid-structure interaction analysis in the time domain 
\newblock{\em Int. J. Numer. Meth. Engng }, 31: 1151--168, 1991.
 
\bibitem{EHR:2007}
J.~Elschner, G.C. ~Hsiao, A.~Rathsfed.
\newblock On the direct and inverse problems in fluid-structural interaction.
\newblock {\em ICIAM 2007 Proceedings in Applied Mathematics and Mechanics}.
7: 1130501--1130502 , 2007. 

\bibitem{EHR:2008}
J.~Elschner, G.C. ~Hsiao, A.~Rathsfed.
\newblock An inverse problem for fluid-solid interaction.
\newblock  {\em  Inverse Problems and Imaging}. 2: 83--120, 2008.

\bibitem{Fi:1972}
G.~ Fichera.
\newblock {\em Existence theorems in elasticity theory}.  347--389, Volume 2
\newblock{Handbuch der Physik}.
\newblock  Springer, 1972.

\bibitem{GHM:2010}
G.N.~Gatica, G.C.~Hsiao, S.~Meddahi.
 \newblock A coupled mixed finite element method for interaction
problem between electromagnetic filed and elastic body 
\newblock {\em SIAM Journal on Numerical Analysis},  48:1338--1368,  2010

\bibitem{HJ:1985}
 M.A.~Hamdi, P.~Jean.
 \newblock A mixed functional for the numerical resolution of wave-structure interaction problems.
\newblock In:{\em  Aero- and Hydro-Acoustic IUTAM Symposium, } G. Comte-Bellot  and J.E.  William (eds.)
\newblock  Springer-Verlag, 1985, 269--276.

\bibitem{HaQiSaSa:2015}
M. E. Hassell, T. Qiu, T. S\'anchez-Vizuet, and F-J. Sayas.
\newblock A new and improved analysis of the time domain boundary integral
 operators for acoustics.
\newblock {\em Journal of Integral Equations and Applications}, 29(1):107--136,
 2017.
 
\bibitem{Hs:1994}
G.C.~Hsiao.
\newblock On the boundary-field equation methods for fluif-structure
\newblock In: {\em Problems and Methods in Mathematical Physics},
L. Jentsch and F. Tr\"oltzsch (eds)
\newblock Teubner-Texte
zur Mathematik, Band 134, B. G. Teubner Veriagsgesellschaft,
Stuttgart, Leipzig,  1994, 79-88. 

\bibitem{HKS:1988}
G.C.~Hsiao, R.E.~Kleinman and L.S.~Schuetz.
\newblock On variational formulations of boundary value problems for fluid-solid interactions.
\newblock In: {\em  Elastic Wave Propagation  I.T.U.A.M. -I.U.P.A.P.
Symposium}, M.F. McCarthy and M.A. Hayes (eds.)
\newblock  Elsevier Science Publishers B.V. (North-Holland),   1989, 321--326,

\bibitem{HKR:2000}
G.C.~Hsiao, R.E.~Kleinman and G.F.~Roach.
\newblock Weak solutions of fluid-solid interaction problems.
\newblock{\em Math. Nachr.}, 218 :139--163, 2000

\bibitem{HsNi:2003}
G.C.~Hsiao, N.~Nigam.
\newblock A transmission problem for fluid-structure
 interaction in the exterior of a thin domain, 
 \newblock{\em Adv.  Differential Equations}, 8(11): 1281--1318, 2003.

\bibitem{HSW:2013}
G.C.~Hsiao, F-J. ~Sayas and R.J.~Weinacht.
\newblock  Time-Dependent fluid-structure Interaction.
\newblock{\em Math. Meth. Appl. Sci.}, 40: 486--500., 2017. \\   
\newblock  Article first published online 19 Mar 2015, 
DOI: 10. 10.1137/14099173X   
             
\bibitem{HsSaSa:2016}
G. C. Hsiao, T. S\'anchez-Vizuet, and F-J. Sayas.
\newblock Boundary and coupled boundary{-}finite element methods for transient wave{-}structure interaction.
\newblock {\em IMA Journal of Numerical Analysis}, 37(1):237--265, 2016.

\bibitem{HsSaSaWe:2016}
G. C. Hsiao, T. S\'anchez-Vizuet, F-J.  Sayas and R.J. Weinacht.
\newblock A time-dependent fluid-thermoelastic solid Interaction.
\newblock{\em IMA Journal of Numerical Analysis}, 39(2):924--956, 2018.

\bibitem{HsSab:2020}
G.~C. Hsiao and T.~S\'anchez-Vizuet.
\newblock Boundary integral formulations for transient linear thermoelasticity
  with combined-type boundary conditions.
\newblock {\em (Submitted)}, 2020.
\newblock https://arxiv.org/abs/2010.04909.

\bibitem{HsSa:2020}
G.~C. Hsiao and T.~S\'anchez-Vizuet.
\newblock Time-domain boundary integral methods in linear thermoelasticity.
\newblock {\em SIAM Journal on Mathematical Analysis}, 52(3):2463--2490, 2020.
\newblock Dedicated to the memory of {F}rancisco-{J}avier {S}ayas.

\bibitem{HsWe:2011a}
G. C. ~Hsiao and R. J.~ Weinacht.
\newblock A representation formula for the wave equation revoted.
 \newblock{\em Applicable Analysis: An international Journal}, 91(2): 371--380, 2012.

\bibitem{HsWe:2011b}
G.C.~Hsiao and R.J.~ Weinacht.
\newblock Transparent boundary conditions for the wave equation---a Kirchhoff point of view.
\newblock {\em Math. Meth. Appl.  Sci.},  36: 2011-2017, 2013

\bibitem{HsWe:2008}
G.C.~Hsiao and W.L.~Wendland.
\newblock{ \em Boundary Integral Equations},
\newblock{\em Applied Mathematical Sciences}, 164
\newblock Springer, Berlin, 2008.

 \bibitem{Ku:1979}
V. D. Kupradze.
\newblock{\em Three-dimensional Problems of the Mathematical Theory of
Elasticity and Thermoelasticity},
\newblock{North-Holland Series in \em Applied Mathematics and Mechanics}, 164
\newblock North-Holland Publishing Company, Amsterdam, New York, Oxford, 1979.

\bibitem{LaSa:2009b}
A. R.  Laliena and F-J. Sayas.
\newblock Theoretical aspects of the application of convolution quadrature to scattering off acoustic waves.
\newblock {\em Numer. Math. }, 112: 637--678, 2009.

\bibitem{Lu:1994}
Ch. Lubich.
\newblock On the multistep time discretization of linear initial-boundary value problems and their
boundary integral equations.
\newblock {\em Numer. Math.}, 67(3):365--389, 1994.

\bibitem{LuSc:1992}
Ch. Lubich and R.~Schneider.
\newblock Time discretization of parabolic boundary integral equations.
\newblock {\em Numer. Math.}, 63(4):455--481, 1992.

\bibitem{LuMa:1995}
C.J. Luke and P.A. Martin.
\newblock Fluid-solid interaction: acoustic scattering by a smooth elastic obstacle
\newblock {\em SIAM J. Appl. Math. },  55 : 904--922, 1995.

\bibitem{Mi:1961}
R.D.~Mindlin.
\newblock On the equations of motion of piezoelectric crystals. In
{\em Problems of Continuous Media}
\newblock SIAM, Philadelphia, 1961

\bibitem{No:1978}
W.~Nowacki.
\newblock Some general theorems of thermopiezoelectricity
\newblock {\em Journal of Thermal Stresses },  1:171--182, 1978.

\bibitem{No:1978a}
W.~Nowacki.
\newblock Electromagnetic Interaction in Elastic Solids. In {\em
International Centre for Mechanical Sciences}, ed. by H.~Parkus
\newblock Springer-Verlag, Wien. 1979.

\bibitem{SaSa:2016}
T. S{\'a}nchez-Vizuet and F-J. Sayas.
\newblock Symmetric boundary-finite element discretization of time dependent
 acoustic scattering by elastic obstacles with piezoelectric behavior.
\newblock {\em Journal of Scientific Computing}, 70(3):1290--1315, 2017.

\bibitem{Sayas:2016}
 F-J. Sayas.
 \newblock {\em Retarded potentials and time domain boundary integral equations: a road-map.}
 \newblock {\em {C}omputational {M}athematics}, 50.
\newblock Springer, 2016.

\bibitem{Sayas:2016errata}
F-J. Sayas.
\newblock \textbf{Errata to:} {\em Retarded potentials and time domain boundary integral equations: a road-map.}
\newblock https://team-pancho.github.io/documents/ERRATA.pdf

\bibitem{Sch:1968}
H.A.~ Schenck.
\newblock Improved integral formulation for acoustic radiation problem
\newblock {\em J. Acoust. Soc. Am.}, 44:41-48, 1968.

 \bibitem{ScBe:1989}
 H.A~. Schenk, G.W.~Benthien.
\newblock The application of a coupled finite-element boundary-element technique to
 large-scale structure acoustic problems.
\newblock  In: {\em proceedings of the Eleventh International Conference on Boundary Element Methods} 
Vol 2, 309-318, 19890

\bibitem{Se:1958}
J.~ Serrin.
\newblock {\em Mathematical Principles of Classical Fluid Mechanics}, 3--26, Volume 8/1
\newblock  Handbuch der Physik,
\newblock Springer, 1959.

\end{thebibliography}
\end{document}